\documentclass[final,onefignum,onetabnum]{siamart190516}
\usepackage{amsmath}
\usepackage{amssymb}
\usepackage{url}
\usepackage{color}
\usepackage{array}
\usepackage{colortbl}
\usepackage{blkarray}
\usepackage{caption}
\usepackage{capt-of}
\usepackage{colortbl}
\usepackage{fancyhdr}
\usepackage{fancyvrb}
\usepackage{float}
\usepackage[T1]{fontenc}
\usepackage{framed}
\usepackage{geometry}
\usepackage{graphicx}
\usepackage{hyperref}
\usepackage[latin1]{inputenc}
\usepackage{scalerel}
\usepackage{stmaryrd}
\usepackage{subfigure}
\usepackage{upquote}
\usepackage{xspace}
\usepackage{todonotes}
\usepackage{booktabs}
\usepackage{multirow}
\definecolor{lightgray}{gray}{0.5}
\definecolor{darkgreen}{rgb}{0,0.6,0.13}
\newcommand{\nc}{\newcommand}
\nc{\dsp}{\displaystyle}
\nc{\txt}{\textstyle}
\nc{\creff}[1]{(\cref{#1})}
\nc{\mrm}[1]{\mathrm{#1}}
\nc{\udl}[1]{\underline{#1}}
\nc{\ovl}[1]{\overline{#1}}
\nc{\al}{\underline{\boldsymbol{\alpha}}}
\nc{\la}{\underline{\boldsymbol{\lambda}}}
\nc{\llbr}{\llbracket}
\nc{\rrbr}{\rrbracket}
\nc{\lbr}{\lbrack}
\nc{\rbr}{\rbrack}
\nc{\N}{\mathbb{N}}
\nc{\Z}{\mathbb{Z}}
\nc{\D}{\mathbb{D}}
\nc{\Q}{\mathbb{Q}}
\nc{\R}{\mathbb{R}}
\nc{\C}{\mathbb{C}}
\nc{\T}{\mathbb{T}}
\nc{\Stwo}{\mathbb{S}^2}
\nc{\tld}[1]{\tilde{#1}}
\nc{\wtld}[1]{\widetilde{#1}}
\nc{\hu}{\hat{u}}
\nc{\wh}[1]{\widehat{#1}}
\nc{\Fbf}{\textbf{F}}
\nc{\Gbf}{\textbf{G}}
\nc{\Lbf}{\textbf{L}}
\nc{\Nbf}{\textbf{N}}
\nc{\Ibf}{\textbf{I}}
\nc{\Dbf}{\textbf{D}} 
\nc{\Tbf}{\textbf{T}}
\nc{\Jbf}{\textbf{J}} 
\nc{\Rbf}{\textbf{R}}   
\nc{\ph}{\varphi}
\nc{\NN}{\mathcal{NN}}
\nc{\OO}{\mathcal{O}}
\nc{\sumeven}{\sum_{k=-N/2}^{N/2}{\hspace{-0.3cm}}'{\;\,}}
\nc{\sumevenk}{\sum_{k=-n/2}^{n/2}{\hspace{-0.3cm}}'{\;\,}}
\nc{\sumevenj}{\sum_{j=-m/2}^{m/2}{\hspace{-0.3cm}}'{\;\,}}
\nc{\sumodd}{\sum_{k=-\frac{N-1}{2}}^{\frac{N-1}{2}}}
\nc{\sumoddl}{\sum_{l=-\frac{N-1}{2}}^{\frac{N-1}{2}}}
\nc{\cqfd}{~\hbox{\vrule width 2.5pt depth 2.5 pt height 3.5 pt}}
\nc{\ra}[1]{}
\nc{\bs}[1]{\boldsymbol{#1}}
\nc{\rr}[1]{\textcolor{red}{#1}}
\nc{\eqvsp}{\\[0.25em]}
\nc{\eqvspp}{\\[0.5em]}
\nc{\eqVsp}{\\[0.75em]}
\nc{\eqVspp}{\\[1em]}
\nc{\dmu}{\delta\hspace{-0.025cm}\mu\hspace{0.02cm}}
\nc{\dnu}{\delta\hspace{-0.010cm}\nu\hspace{0.01cm}}
\nc{\dep}{\delta\hspace{-0.015cm}\epsilon\hspace{0.02cm}}
\title{Computing weakly singular and near-singular integrals over curved boundary elements}
\author{Hadrien Montanelli\thanks{INRIA Saclay \& \'{E}cole Polytechnique, 91120 Palaiseau, France. This publication was based on work supported by the Direction G\'en\'erale de l'Armement (grant number AID 2018 60 0074).} 
\and Matthieu Aussal\thanks{INRIA Saclay \& \'{E}cole Polytechnique, 91120 Palaiseau, France. Also supported by grant AID 2018 60 0074.}
\and Houssem Haddar\thanks{INRIA Saclay \& \'{E}cole Polytechnique, 91120 Palaiseau, France.}}

\begin{document}
\setlength{\parskip}{0pt}

\maketitle

\begin{abstract}
We present algorithms for computing weakly singular and near-singular integrals arising when solving the 3D Helmholtz equation with curved boundary elements. These are based on the computation of the preimage of the singularity in the reference element's space using Newton's method, singularity subtraction, the continuation approach, and transplanted Gauss quadrature. We demonstrate the accuracy of our method for quadratic basis functions and quadratic triangles with several numerical experiments, including the scattering by two half-spheres.
\end{abstract}

\begin{keywords}
Helmholtz equation, integral equations, boundary element method, singular integrals, near-singular integrals, Taylor series, homogeneous functions, continuation approach, Gauss quadrature
\end{keywords}

\begin{AMS}
35J05, 41A55, 41A58, 45E05, 45E99, 65N38, 65R20, 78M15
\end{AMS}

\pagestyle{myheadings}
\thispagestyle{plain}

\markboth{MONTANELLI, AUSSAL AND HADDAR}{Computing weakly singular and near-singular integrals in high-order boundary elements}

\section{Introduction}

The Helmholtz equation $\Delta u + k^2u = 0$ in the presence of an obstacle may be rewritten as an integral equation on the obstacle's boundary via \textit{layer potentials}~\cite{colton1983}. For example, the radiating solution to the Dirichlet problem $\Delta u + k^2u = 0$ in $\R^3\setminus\overline{\Omega}$ with $u = u_D$ on $\Gamma=\partial\Omega$, for some bounded $\Omega$ whose complement is connected, can be obtained via the equation \cite[Thm.~3.28]{colton1983}
\begin{align}
\int_{\Gamma}G(\bs{x},\bs{y})\varphi(\bs{y})d\Gamma(\bs{y}) = u_D(\bs{x}), \quad \bs{x}\in\Gamma,
\label{eq:SL}
\end{align}
based on the \textit{single-layer potential}; $G$ is the Green's function of the Helmholtz equation in 3D,
\begin{align}
G(\bs{x},\bs{y}) = \frac{1}{4\pi}\frac{e^{ik\vert\bs{x}-\bs{y}\vert}}{\vert\bs{x}-\bs{y}\vert}.
\label{eq:kernel}
\end{align}
Once \cref{eq:SL} is solved for $\varphi$, unique if $k^2$ is not a Dirichlet eigenvalue of $-\Delta$ in $\Omega$ \cite[Thm.~3.30]{colton1983}, the solution $u$ may be represented by the left-hand side of \cref{eq:SL} for all $\bs{x}\in\R^3\setminus\Omega$.

From a numerical point of view, integral equations of the form of \cref{eq:SL} are particularly challenging for several reasons. First, when $\bs{x}$ approaches $\bs{y}$, the integral becomes singular and standard quadrature schemes fail to be accurate---analytic integration or carefully-derived quadrature formulas are, hence, required. Second, the resulting linear systems after discretization are often dense. For large wavenumbers $k$, only iterative methods can be used to solve them (with the help of specialized techniques to accelerate the matrix-vector products, such as the Fast Multipole Method~\cite{greengard1987} or hierarchical matrices \cite{hackbusch2015}). In this respect, the use of high-order numerical discretization schemes may be helpful in enlarging the interval of feasible wavenumbers.

Two of the most popular methods for solving such equations are the \textit{Nystr\"{o}m} and \textit{boundary element} methods. Nystr\"{o}m methods, which seek the numerical solution of \cref{eq:SL} by replacing the integral with an appropriately weighted sum, exhibit high-order convergence but are often limited to smooth and simple geometries \cite{kress2014}. These are particularly efficient when solving the 2D Helmholtz equation with 1D integrals, for which high-order quadrature rules may be derived \cite{alpert1999, hao2014, kapur1997, klockner2013, kress1991}. For the 3D equation with 2D integrals, there is no simple high-order quadrature rule, which makes it considerably more difficult. There are, nonetheless, some Nystr\"{o}m methods available \cite{bruno2020, bruno2007, bruno2001, perezarancibia2019}.

\begin{table}
\vspace{.5cm}
\caption{\textit{Techniques that have been proposed to deal with the singular and near-singular integrals that arise when using boundary element methods for 3D problems. The method we propose belongs to the bottom-right category and combines the continuation approach with singularity subtraction.}}
\centering
\ra{1.3}
\begin{tabular}{c|cc}
\toprule
& {\small\textbf{Singular Integrals}} & {\small\textbf{Singular \& Near-singular Integrals}} \\
\midrule
\multirow{3}{*}{{\small\textbf{Flat Elements}}} & subtraction -- & subtraction \cite{jarvenpaa2006} \\
& cancellation \cite{duffy1982, reid2015} & cancellation \cite{graglia2008, hackbusch1994, johnston2013, khayat2005, scuderi2008} \\
& imbedding/continuation \cite{rosen1992, vijayakumar1988} & imbedding/continuation \cite{lenoir2012, rosen1993, salles2013, vijayakumar1989} \\
\midrule
\multirow{3}{*}{{\small\textbf{Curved Elements}}} & subtraction \cite{aliabadi1985, guiggiani1990, guiggiani1992, hall1991} & subtraction -- \\
& cancellation \cite{hackbusch1993} & cancellation \cite{hayami1987, ma2002, qin2011, telles1987}\\
& imbedding/continuation -- & imbedding/continuation \cite{rosen1995} \\
\bottomrule
\end{tabular}
\label{table:intsing}
\end{table}

Boundary element methods, which are based on variational formulations of the integral \cref{eq:SL}, are much more flexible with respect to geometry but typically achieve low-order convergence \cite{sauter2011}. Various techniques have been proposed to deal with the singular and near-singular integrals that arise when using these methods for 3D problems, including \textit{singularity subtraction} \cite{aliabadi1985, guiggiani1990, guiggiani1992, hall1991, jarvenpaa2006}, \textit{singularity cancellation} \cite{duffy1982, graglia2008, hackbusch1993, hackbusch1994, hayami1987, johnston2013, khayat2005, ma2002, qin2011, reid2015, scuderi2008, telles1987}, the \textit{invariant imbedding} \cite{rosen1992, vijayakumar1988}, and the \textit{continuation approach} \cite{lenoir2012, rosen1993, rosen1995, salles2013, vijayakumar1989}.  We classify these methods in \cref{table:intsing}, depending on their ability to handle flat/curved elements and singular/near-singular integrals---the bottom right category is the most general and challenging one.

In singularity subtraction schemes, which go back to Aliabadi, Hall, and Phemister in 1985~\cite{aliabadi1985}, terms having the same asymptotic behavior as the integrand at the singularity are first subtracted, leaving a bounded difference that may be integrated numerically. The singular terms, captured via asymptotic expansions, are then integrated analytically in one \cite{guiggiani1990, guiggiani1992} or both variables \cite{aliabadi1985, hall1991, jarvenpaa2006}.

Singularity cancellation schemes, which can be traced back to Duffy's paper in the 1980s \cite{duffy1982}, rely on a change of variables such that the Jacobian of the transformation cancels the singularity. The resulting integrand is then analytic in the transformed variables and hence amenable to numerical integration by a Cartesian product of Gauss quadrature rules. Examples of such transformations include a polar coordinate mapping \cite{hackbusch1994}, modifying the distance function in the integrand \cite{ma2002, qin2011}, and various nonlinear functions \cite{duffy1982, graglia2008, hackbusch1993, hayami1987, johnston2013, khayat2005, reid2015, scuderi2008, telles1987}.

Both the invariant imbedding method and the continuation approach utilize the homogeneity of the integrand to transform 2D singular integrals on boundary elements to regular 1D integrals along their contours. The imbedding method for singular integrals was first introduced by Vijayakumar and Cormac in 1988 \cite{vijayakumar1988}, who subsequently proposed the continuation approach as a generalization to near-singular integrals \cite{vijayakumar1989}. These techniques were further extended in the 1990s \cite{rosen1992, rosen1993, rosen1995}. Note that, in \cite{rosen1995}, the continuation approach was combined with singularity subtraction for high-order curved elements. Finally, Lenoir and Salles proposed in the 2010s a fully analytic method for computing near-singular integrals borrowing ideas from the continuation approach \cite{lenoir2012, salles2013}.

We propose in this paper a novel method for computing singular/near-singular integrals that arise when solving \cref{eq:SL} and evaluating the solution close to $\Gamma$ with curved triangular elements. More specifically, we consider weakly singular and near-singular integrals of the form
\begin{align}
I(\bs{x}_0) = \int_\mathcal{T}\frac{\varphi(F^{-1}(\bs{x}))}{\vert\bs{x} - \bs{x}_0\vert}dS(\bs{x}),
\label{eq:intsing}
\end{align}
where $\mathcal{T}$ is a \textit{curved} triangle defined by a polynomial transformation $F:\widehat{T}\to\mathcal{T}$ of degree $q\geq1$ from some \textit{flat} reference triangle $\widehat{T}$, $\bs{x}_0$ is a point on or close to $\mathcal{T}$, and $\varphi:\widehat{T}\to\R$ is a polynomial function of degree~$p\geq0$ (not necessarily equal to $q$). Our method is based on the computation of the preimage of the singularity in the reference element's space using Newton's method, singularity subtraction with high-order Taylor-like asymptotic expansions, the continuation approach, and transplanted Gauss quadrature. Integrals of the form of \cref{eq:intsing} appear when \textit{evaluating} the solution---we will also look at integrals over two curved triangles, which occur when \textit{solving} the integral equation.

Combining the continuation approach with singularity subtraction was first proposed in \cite{rosen1995}. The originality of our approach is that we first map back to the reference triangle, then perform singularity subtraction, and finally employ the continuation approach. The authors in \cite{rosen1995} directly utilized the continuation approach with singularity subtraction via the introduction of local coordinate systems---these systems must be constructed and stored for each element, which is trickier to implement and computationally more expensive. On top of this, their method does not handle the case where the singularity is close to an edge of the element---the hardest case in practice.

We present our method based on first-order Taylor series in \cref{sec:first-order}, its extension to high-order approximations in \cref{sec:high-order}, and numerical examples in \cref{sec:numerics}, including the numerical solution of the scattering by two half-spheres using quadratic basis functions and quadratic triangular elements.

\section{Method based on first-order Taylor series}\label{sec:first-order}

We expose here our method to compute integrals of the form of \cref{eq:intsing} using first-order Taylor expansions. The extension of our algorithms to higher order approximations will be presented in \cref{sec:high-order}. We proceed in five steps.

\paragraph{Step 1.~Mapping back} We map $\mathcal{T}$ back to the reference element $\widehat{T}$,
\begin{align}
I(\bs{x}_0) = \int_{\widehat{T}}\frac{\psi(\bs{\hat{x}})}{\vert F(\bs{\hat{x}}) - \bs{x}_0\vert}dS(\bs{\hat{x}}),
\label{eq:step1}
\end{align}
where $F:\widehat{T}\to\mathcal{T}$, the transformation of degree $q$ from the 2D flat triangle $\widehat{T}$ to the 3D curved triangle $\mathcal{T}$, has a $3\times2$ Jacobian matrix $J$ with columns $J_1$ and $J_2$, and $\psi(\bs{\hat{x}})=\varphi(\bs{\hat{x}})\vert J_1(\bs{\hat{x}})\times J_2(\bs{\hat{x}})\vert$. We provide an explicit example of such a mapping $F$ for quadratic triangles ($q=2$) in \cref{sec:step1}.

\paragraph{Step 2.~Locating the singularity} We write
\begin{align}
\bs{x}_0 = F(\bs{\hat{x}}_0) + \bs{x}_0 - F(\bs{\hat{x}}_0)
\label{eq:step2}
\end{align}
for some $\bs{\hat{x}}_0\in\R^2$ such that $F(\bs{\hat{x}}_0)\in\mathcal{S}$ is the closest point to $\bs{x}_0$ on the surface $\mathcal{S}$ defined by
\begin{align}
\mathcal{S} = \left\{F(\bs{\hat{x}}), \; \bs{\hat{x}}\in\R^2\right\} \supset \mathcal{T} = \left\{F(\bs{\hat{x}}), \; \bs{\hat{x}}\in\widehat{T}\right\}.
\end{align}
In other words, we compute the preimage $\bs{\hat{x}}_0$ of the singularity or near-singularity; see \cref{sec:step2}.

Let $\rho$ be the diameter of $\mathcal{T}$, defined as the largest (Euclidean) distance between two points~on~$\mathcal{T}$, and let us define the parameter $h\geq0$ of near-singularity and the unit vector $\bs{e}_h$ (for $h\neq0$) via
\begin{align}
h=\vert F(\bs{\hat{x}}_0)-\bs{x}_0\vert, \quad \bs{e}_h = \frac{F(\bs{\hat{x}}_0)-\bs{x}_0}{h}.
\end{align}
The integral \cref{eq:step1} is singular when $h=0$ and $\bs{\hat{x}}_0\in\widehat{T}$, which implies $\bs{x}_0=F(\bs{\hat{x}}_0)\in\mathcal{T}$, and near-singular when $h\ll \rho$ and $\bs{\hat{x}}_0$ is close to the reference element. (When $h\sim\rho$, the integral is analytic in $\bs{\hat{x}}$ and may be computed exponentially accurately with Gauss quadrature on triangles \cite{lether1976}.)

\paragraph{Step 3.~Taylor expanding/subtracting} We compute the singular or near-singular term in \cref{eq:step1} using the first-order Taylor series of $F(\bs{\hat{x}}) - \bs{x}_0$ at $\bs{\hat{x}}_0$ (see \cref{sec:step3} for the detailed calculations),
\begin{align}
T_{-1}(\bs{\hat{x}},h) = \frac{\psi(\bs{\hat{x}}_0)}{\sqrt{\vert J(\bs{\hat{x}}_0)(\bs{\hat{x}} - \bs{\hat{x}}_0)\vert^2 + h^2}},
\label{eq:Tn1}
\end{align}
and add it to/subtract it from \cref{eq:step1},
\begin{align}
I(\bs{x}_0) = \int_{\widehat{T}}T_{-1}(\bs{\hat{x}},h)dS(\bs{\hat{x}}) + \int_{\widehat{T}}\left[\frac{\psi(\bs{\hat{x}})}{\vert F(\bs{\hat{x}}) - \bs{x}_0\vert} - T_{-1}(\bs{\hat{x}},h)\right]dS(\bs{\hat{x}}).
\end{align}
The first integral is singular or near-singular and will be computed in Steps 4--5. The second integral has a bounded integrand---it can be computed with Gauss quadrature on triangles \cite{lether1976}. To render the integrand in the second integral smoother, which would accelerate convergence by quadrature, higher-order Taylor expansions would be needed; this will be discussed in \cref{sec:high-order}.

\paragraph{Step 4.~Continuation approach} Let
\begin{align}
I_{-1}(h) = \int_{\widehat{T}}T_{-1}(\bs{\hat{x}},h)dS(\bs{\hat{x}}) = \int_{\widehat{T}-\bs{\hat{x}}_0}\frac{\psi(\bs{\hat{x}}_0)}{\sqrt{\vert J(\bs{\hat{x}}_0)\bs{\hat{x}}\vert^2 + h^2}}dS(\bs{\hat{x}}).
\label{eq:In1_tri}
\end{align}
The integrand is homogeneous in both $\bs{\hat{x}}$ and $h$, and using the continuation approach \cite{rosen1995}, we reduce the 2D integral \cref{eq:In1_tri} to a sum of three 1D integrals along the edges of the shifted triangle $\widehat{T}-\bs{\hat{x}}_0$,
\begin{align}
I_{-1}(h) = \psi(\bs{\hat{x}}_0)\sum_{j=1}^3\hat{s}_j\int_{\partial\widehat{T}_j - \bs{\hat{x}}_0}\frac{\sqrt{\vert J(\bs{\hat{x}}_0)\bs{\hat{x}}\vert^2+h^2}-h}{\vert J(\bs{\hat{x}}_0)\bs{\hat{x}}\vert^2}ds(\bs{\hat{x}}),
\label{eq:In1_vtx}
\end{align}
where the $\hat{s}_j$'s are the distances from the origin to the edges $\partial\widehat{T}_j-\bs{\hat{x}}_0$ of $\widehat{T}-\bs{\hat{x}}_0$ (see \cref{sec:step4}).

\paragraph{Step 5.~Transplanted Gauss quadrature} On the one hand, when the origin is far from all three edges (\textit{e.g.}, near the center of the triangle), each integrand in \cref{eq:In1_vtx} is analytic---convergence with Gauss quadrature is exponential \cite[Thm.~19.3]{trefethen2019}. On the other, when the origin lies on an edge, the corresponding integrand is singular---however, the distance $\hat{s}_j$ to that edge equals $0$, the product ``$\hat{s}_j$ times integral'' is also~$0$, and the integral need not be computed (see \cref{sec:step5}). Issues arise when the origin is close to one of the edges---the integrand is analytic but near-singular, convergence with Gauss quadrature is exponential but slow. We circumvent the near-singularity issue by using transplanted Gauss quadrature \cite{hale2008}. We take advantage of the \textit{a priori} knowledge of the singularities to utilize transplanted rules with significantly improved convergence rates (\cref{thm:convergence}).

We now expose each step of our method in detail with an emphasis on quadratic triangles.

\subsection{Mapping back (Step 1)}\label{sec:step1}

Let $\widehat{T}$ be the reference triangle,
\begin{align}
\widehat{T} = \{(\hat{x}_1,\hat{x}_2) \, : \, 0\leq\hat{x}_1\leq1, \; 0\leq\hat{x}_2\leq1-\hat{x}_1\} \subset\R^2.
\end{align}
A quadratic triangle $\mathcal{T}\subset\R^3$ is defined by six points $\bs{a}_j\in\R^3$ and the map $F:\widehat{T}\to\mathcal{T}$ given by
\begin{align}
F(\bs{\hat{x}}) = \sum_{j=1}^6\varphi_j(\bs{\hat{x}})\bs{a}_j\in\R^3,
\label{eq:mapping}
\end{align}
where the $\varphi_j$'s are the real-valued quadratic basis functions defined on $\widehat{T}$ by
\begin{align}
& \varphi_1(\bs{\hat{x}}) = \lambda_1(\bs{\hat{x}})(2\lambda_1(\bs{\hat{x}}) - 1), \quad\quad \varphi_4(\bs{\hat{x}}) = 4\lambda_1(\bs{\hat{x}})\lambda_2(\bs{\hat{x}}), \nonumber \\
& \varphi_2(\bs{\hat{x}}) = \lambda_2(\bs{\hat{x}})(2\lambda_2(\bs{\hat{x}}) - 1), \quad\quad \varphi_5(\bs{\hat{x}}) = 4\lambda_2(\bs{\hat{x}})\lambda_3(\bs{\hat{x}}), \label{eq:basis_func} \\
& \varphi_3(\bs{\hat{x}}) = \lambda_3(\bs{\hat{x}})(2\lambda_3(\bs{\hat{x}}) - 1), \quad\quad \varphi_6(\bs{\hat{x}}) = 4\lambda_1(\bs{\hat{x}})\lambda_3(\bs{\hat{x}}), \nonumber
\end{align}
with $\lambda_1(\bs{\hat{x}}) = 1 - \hat{x}_1 - \hat{x}_2$, $\lambda_2(\bs{\hat{x}}) = \hat{x}_1$, and $\lambda_3(\bs{\hat{x}}) = \hat{x}_2$; we show in \cref{fig:tri} an example of such a triangle. The $3\times2$ Jacobian matrix $J$ is then defined by
\begin{align}
J(\bs{\hat{x}}) = 
\left(
\begin{array}{c|c}
& \\
\hspace{-0.15cm} J_1(\bs{\hat{x}}) & J_2(\bs{\hat{x}}) \hspace{-0.15cm}\phantom{} \\
&
\end{array}
\right)
=
\left(
\begin{array}{c|c}
& \\
\hspace{-0.15cm} F_{\hat{x}_1}(\bs{\hat{x}}) & F_{\hat{x}_2}(\bs{\hat{x}}) \hspace{-0.15cm}\phantom{} \\
&
\end{array}
\right)\in\R^{3\times2},
\end{align}
with (componentwise) partial derivatives $F_{\hat{x}_1}(\bs{\hat{x}})\in\R^3$ and $F_{\hat{x}_2}(\bs{\hat{x}})\in\R^3$ with respect to $\hat{x}_1$ and $\hat{x}_2$.

\begin{figure}
\centering
\begin{tikzpicture}[scale = 1.10]
      \draw[thick] (0, 0) -- (2, 0) {};
      \draw[thick] (0, 0) -- (0, 2) {};
      \draw[thick] (2, 0) -- (0, 2) {};
      \node[fill, circle, scale=0.5] at (0, 0) {};
      \node[anchor=north] at (0, 0-0.1) {$\bs{\hat{a}}_1$};
      \node[fill, circle, scale=0.5] at (2, 0) {};
      \node[anchor=north] at (2, 0-0.1) {$\bs{\hat{a}}_2$};
      \node[fill, circle, scale=0.5] at (0, 2) {};
      \node[anchor=south] at (0, 2+0.1) {$\bs{\hat{a}}_3$};
      \node[fill, circle, scale=0.5] at (1, 0) {};
      \node[anchor=north] at (1, 0-0.1) {$\bs{\hat{a}}_4$};
      \node[fill, circle, scale=0.5] at (1, 1) {};
      \node[anchor=west] at (1+0.2, 1) {$\bs{\hat{a}}_5$};
      \node[fill, circle, scale=0.5] at (0, 1) {};
      \node[anchor=east] at (0-0.1, 1) {$\bs{\hat{a}}_6$};
      \node[anchor=north] at (2/3, 3/3) {$\widehat{T}$};
      \draw[thick, ->] (1.65+1, 1) -- (3.65+1, 1) {};
      \node[anchor=north] at (2.65+1, 1-0.1) {$F$};
      \draw[thick] (4+2, 0) -- (6+2, 0) {};
      \draw[thick] (4+2, 0) -- (4+2, 2) {};
      \draw[thick, domain=-1:1, smooth, variable=\theta, black] plot ({6+2*(1-(\theta+1)/2) + 2*2*(2*0.7-1)*(1-(\theta+1)/2)*(\theta+1)/2}, {2*(\theta+1)/2 + 2*2*(2*0.6-1)*(1-(\theta+1)/2)*(\theta+1)/2});
      \node[fill, circle, scale=0.5] at (4+2, 0) {};
      \node[anchor=north] at (4+2, 0-0.1) {$\bs{a}_1$};
      \node[fill, circle, scale=0.5] at (6+2, 0) {};
      \node[anchor=north] at (6+2, 0-0.1) {$\bs{a}_2$};
      \node[fill, circle, scale=0.5] at (4+2, 2) {};
      \node[anchor=south] at (4+2, 2+0.1) {$\bs{a}_3$};
      \node[fill, circle, scale=0.5] at (5+2, 0) {};
      \node[anchor=north] at (5+2, 0-0.1) {$\bs{a}_4$};
      \node[fill, circle, scale=0.5] at (6+2*0.7, 2*0.6) {};
      \node[anchor=west] at (6+2*0.7+0.2, 2*0.6) {$\bs{a}_5$};
      \node[fill, circle, scale=0.5] at (4+2, 1) {};
      \node[anchor=east] at (4-0.1+2, 1) {$\bs{a}_6$};
      \node[anchor=north] at (2/3+4+.2+2, 3/3) {$\mathcal{T}$};
\end{tikzpicture}
\caption{\textit{A quadratic triangle $\mathcal{T}$ is obtained from the flat reference triangle $\widehat{T}$ via the map $F$ \cref{eq:mapping}. The $\bs{\hat{a}}_j$'s are given by $\bs{\hat{a}}_1=(0,0)$, $\bs{\hat{a}}_2=(1,0)$, and $\bs{\hat{a}}_3=(0,1)$, with midpoints $\bs{\hat{a}}_4$, $\bs{\hat{a}}_5$, and $\bs{\hat{a}}_6$; these verify $\varphi_i(\bs{\hat{a}}_j)=\delta_{ij}$. The $\bs{a}_j$'s on the quadratic triangle verify $\bs{a}_j=F(\bs{\hat{a}}_j)$. We display here a quadratic triangle $\mathcal{T}$ that is 2D but has a curved edge---quadratic triangles are, in general, 3D.}}
\label{fig:tri}
\end{figure}

\subsection{Locating the singularity (Step 2)}\label{sec:step2}

To compute $\bs{\hat{x}}_0$, the preimage of the singularity or near-singularity, we minimize $E(\bs{\hat{x}}) = \vert F(\bs{\hat{x}}) - \bs{x}_0\vert^2$, which we write as $E(\bs{\hat{x}}) = \vert e(\bs{\hat{x}})\vert^2$; for example,
\begin{align}
e(\bs{\hat{x}}) = \sum_{j=1}^6\varphi_j(\bs{\hat{x}})\bs{a}_j-\bs{x}_0
\end{align}
for quadratic triangles. Our optimization procedure employs the exact $2\times1$ gradient $\nabla E$,
\begin{align}
\nabla E(\bs{\hat{x}}) = 2
\begin{pmatrix}
\dsp e(\bs{\hat{x}})\cdot e_{\hat{x}_1}(\bs{\hat{x}}) \eqvspp
\dsp e(\bs{\hat{x}})\cdot e_{\hat{x}_2}(\bs{\hat{x}})
\end{pmatrix}.
\label{eq:gradient}
\end{align}
We also utilize the exact $2\times2$ Hessian matrix $H$,
\begin{align}
H(\bs{\hat{x}}) = 2
\begin{pmatrix}
\dsp e(\bs{\hat{x}})\cdot e_{\hat{x}_1\hat{x}_1}(\bs{\hat{x}}) + e_{\hat{x}_1}(\bs{\hat{x}})\cdot e_{\hat{x}_1}(\bs{\hat{x}}) &
\dsp e(\bs{\hat{x}})\cdot e_{\hat{x}_1\hat{x}_2}(\bs{\hat{x}}) + e_{\hat{x}_1}(\bs{\hat{x}})\cdot e_{\hat{x}_2}(\bs{\hat{x}}) \eqvspp
\text{SYMM.} & \dsp e(\bs{\hat{x}})\cdot e_{\hat{x}_2\hat{x}_2}(\bs{\hat{x}}) + e_{\hat{x}_2}(\bs{\hat{x}})\cdot e_{\hat{x}_2}(\bs{\hat{x}}) 
\end{pmatrix}.
\label{eq:hessian}
\end{align}
Note that the partial derivatives in \cref{eq:gradient} and \cref{eq:hessian} can be easily computed from the basis functions.

We start by initializing $\bs{\hat{x}}_0=(0,0)$. Our algorithm is then based on Newton's method,
\begin{align}
\bs{\hat{x}}_0^{\text{new}} = \bs{\hat{x}}_0 + \alpha p(\bs{\hat{x}}_0), \quad p(\bs{\hat{x}}_0) = -H(\bs{\hat{x}}_0)^{-1}\nabla E(\bs{\hat{x}}_0).
\label{eq:newton}
\end{align}
When the Hessian $H(\bs{\hat{x}}_0)$ is not positive definite, which we can check by evaluating its eigenvalues, we shift it by $\tau I_{2\times2}$ with $\tau = \max(0, \beta - \min(\lambda))$ and $\beta=10^{-3}$ \cite[Sec.~3.4]{nocedal2006}. The step length $\alpha$ is computed with a \textit{backtracking line search} with parameters $\rho=0.5$ and $c=10^{-4}$ \cite[Sec.~3.1]{nocedal2006}---starting from $\alpha=1$, it is progressively decreased via $\alpha:=\rho\alpha$ until it satisfies \textit{Armijo condition}
\begin{align}
E(\bs{\hat{x}}_0^{\text{new}}) \leq E(\bs{\hat{x}}_0) + c\alpha\nabla E(\bs{\hat{x}}_0)\cdot p(\bs{\hat{x}}_0).
\end{align}

Let us conclude this section by emphasizing that the output of the algorithm is a point $\bs{\hat{x}}_0\in\R^2$ such that its image $F(\bs{\hat{x}}_0)$ is the closest point to $\bs{x}_0$ on $\mathcal{S}\supset\mathcal{T}$. In certain cases, $\bs{\hat{x}}_0$ will be inside the reference element and hence $F(\bs{\hat{x}}_0)\in\mathcal{T}$, but in others, $\bs{\hat{x}}_0$ will be outside and $F(\bs{\hat{x}}_0)\in\mathcal{S}\setminus\overline{\mathcal{T}}$.

\subsection{Taylor expanding/subtracting (Step 3)}\label{sec:step3}

To first-order in $\delta\hat{x}=\vert\bs{\hat{x}} - \bs{\hat{x}}_0\vert$, we have
\begin{align}
F(\bs{\hat{x}}) - \bs{x}_0 =  J(\bs{\hat{x}}_0)(\bs{\hat{x}} - \bs{\hat{x}}_0) + h\bs{e}_h + \OO(\delta\hat{x}^2),
\end{align}
where the term $\OO(\delta\hat{x}^2)$, which is added componentwise, is bounded by
\begin{align*}
C\max_{i,j\in\left\{1,2\right\}}\max_{\bs{\hat{x}}\in\widehat{S}}\left\{\vert F_{{\hat{x}_i\hat{x}_j}}(\bs{\hat{x}})\vert\right\}\delta\hat{x}^2,
\end{align*}
for some constant $C>0$ and bounded domain $\widehat{S}\supset\widehat{T}$ that includes $\bs{\hat{x}}_0$. This yields
\begin{align}
\vert F(\bs{\hat{x}}) - \bs{x}_0\vert^2 = \vert J(\bs{\hat{x}}_0)(\bs{\hat{x}} - \bs{\hat{x}}_0)\vert^2 +  h^2 + h\OO(\delta\hat{x}^2) + \OO(\delta\hat{x}^3),
\end{align}
since $F(\bs{\hat{x}}_0)-\bs{x}_0$ is orthogonal to the tangent plane to $\mathcal{T}$ at $F(\bs{\hat{x}}_0)$ and $ J(\bs{\hat{x}}_0)(\bs{\hat{x}} - \bs{\hat{x}}_0)$ is in that~plane. Since the amplitudes of the derivatives of $F$ are of the order of $\rho$ (see \cref{eq:mapping}), the coefficients in the term with the Jacobian have size $\rho^2$ while the coefficient in $h\OO(\delta\hat{x}^2)$ has size $h\rho$; the latter may be be neglected since $h\ll\rho$. From the expansion of $R^2=\vert F(\bs{\hat{x}}) - \bs{x}_0\vert^2$, we retrieve that of $\psi R^{-1}$,
\begin{align}
\psi(\bs{\hat{x}})\vert F(\bs{\hat{x}})-\bs{x}_0\vert^{-1} \approx \psi(\bs{\hat{x}}_0)\left[\vert J(\bs{\hat{x}}_0)(\bs{\hat{x}} - \bs{\hat{x}}_0)\vert^2 + h^2\right]^{-\frac{1}{2}}.
\end{align}
This is how we derived the $\OO(\delta\hat{x}^{-1})$ term $T_{-1}$ in \cref{eq:Tn1}. Note that, when $h>0$, the above expression is a Taylor series but when $h=0$, $R^{-1}$ is indeed singular and this is merely an asymptotic expansion.

\subsection{Continuation approach (Step 4)}\label{sec:step4}

\paragraph{Singular case} A function $f:\widehat{T}\subset\R^2\rightarrow\R$ is said to be \textit{positive homogeneous of degree $r$} if there exists an integer $r$ such that $f(\lambda \bs{\hat{x}})=\lambda^r f(\bs{\hat{x}})$, for all $\bs{\hat{x}}\in\widehat{T}$ and $\lambda>0$. Such functions verify $\bs{\hat{x}}\cdot\nabla f(\bs{\hat{x}}) = rf(\bs{\hat{x}})$---\textit{Euler's homogeneous function theorem}. (To see this, differentiate with respect to $\lambda$ and evaluate at $\lambda=1$.) Using integration by parts and Euler's theorem, we get
\begin{align}
r\int_{\widehat{T}} f(\bs{\hat{x}})dS(\bs{\hat{x}}) = \int_{\partial\widehat{T}} f(\bs{\hat{x}})\bs{\hat{x}}\cdot\bs{\hat{\nu}}(\bs{\hat{x}})ds(\bs{\hat{x}}) - 2\int_{\widehat{T}} f(\bs{\hat{x}})dS(\bs{\hat{x}}),
\end{align}
with normal vector $\bs{\hat{\nu}}(\bs{\hat{x}})$. In the following, we will only deal with positive homogeneous functions of degree $r\geq-1$. For these functions, we can safely divide by $r+2$ to reach the following equation,
\begin{align}
I = \int_{\widehat{T}} f(\bs{\hat{x}})dS(\bs{\hat{x}}) = \frac{1}{r+2}\int_{\partial\widehat{T}}f(\bs{\hat{x}})\bs{\hat{x}}\cdot\bs{\hat{\nu}}(\bs{\hat{x}})ds(\bs{\hat{x}}) \quad\quad \text{($f$ homegeneous in $\bs{\hat{x}}$)}.
\label{eq:homogeneous}
\end{align}

\begin{figure}
\centering
\begin{tikzpicture}[scale = 0.80]
      \draw[thick] (2*0, 2*0) -- (2*0, 2*2) {};
      \draw[thick] (2*0, 2*0) -- (2*2, 2*0) {};
      \draw[thick] (2*0, 2*2) -- (2*2, 2*0) {};
      \node[fill, circle, scale=0.5] at (2*0, 2*0) {};
      \node[fill, circle, scale=0.5] at (2*2, 2*0) {};
      \node[fill, circle, scale=0.5] at (2*0, 2*2) {};
      \node[fill, circle, scale=0.5] at (2*2/3, 2*2/3) {};
      \node[anchor=west] at (2*2/3+0.1, 2*2/3-0.1) {$\bs{0}$};
      \node[anchor=west] at (0.7, 0.5) {$\hat{s}_1$};
      \node[anchor=west] at (1+0.1, 2) {$\hat{s}_2$};
      \node[anchor=west] at (0.25, 1.65) {$\hat{s}_3$};
      \draw[thick, dashed] (2*2/3, 2*2/3) -- (2, 2) {};
      \draw[thick, dashed] (2*2/3, 2*2/3) -- (0, 1+1/3) {};
      \draw[thick, dashed] (2*2/3, 2*2/3) -- (1+1/3, 0) {};
      \draw[->, thick, red] (0, 4)--(-1.41421356237, 4);      
      \draw[->, thick, red] (0, 0)--(0,-1.41421356237);      
  	  \draw[->, thick, red] (4, 0)--(5, 1);      
  	  \node[anchor=west, red] at (0+0.05,-1.41421356237) {$\bs{\hat{\nu}}_1$};
  	  \node[anchor=east, red] at (5-0.1, 1) {$\bs{\hat{\nu}}_2$};
  	  \node[anchor=north, red] at (-1.41421356237, 4-0.05) {$\bs{\hat{\nu}}_3$};
      \node[anchor=east] at (0-0.1, 0-0.1) {$\bs{\hat{a}}_1-\bs{\hat{x}}_0=(-\hat{x}_0,-\hat{y}_0)$};
      \node[anchor=west] at (4+0.1, 0-0.1) {$\bs{\hat{a}}_2-\bs{\hat{x}}_0=(1-\hat{x}_0,-\hat{y}_0)$};
      \node[anchor=west] at (0+0.1, 4+0.1) {$\bs{\hat{a}}_3-\bs{\hat{x}}_0=(-\hat{x}_0,1-\hat{y}_0)$};
      \node[anchor=north] at (2, 0-0.1) {$\bs{\hat{r}}_1(t)$};
      \node[anchor=west] at (2.4+0.15, 1.6+0.15) {$\bs{\hat{r}}_2(t)$};
      \node[anchor=east] at (0-0.1, 2) {$\bs{\hat{r}}_3(t)$};
\end{tikzpicture}
\caption{\textit{The triangle $\widehat{T}-\bs{\hat{x}}_0$ above is the reference triangle $\widehat{T}$ of \cref{fig:tri} shifted by $\bs{\hat{x}}_0=(\hat{x}_0,\hat{y}_0)$. The (signed) distances $\hat{s}_j$ to the edges are given by $\hat{s}_1=\hat{y}_0$, $\hat{s}_2=\sqrt{2}/2(1-\hat{x}_0-\hat{y}_0)$, and $\hat{s}_3=\hat{x}_0$.}}
\label{fig:shifted_tri}
\end{figure}
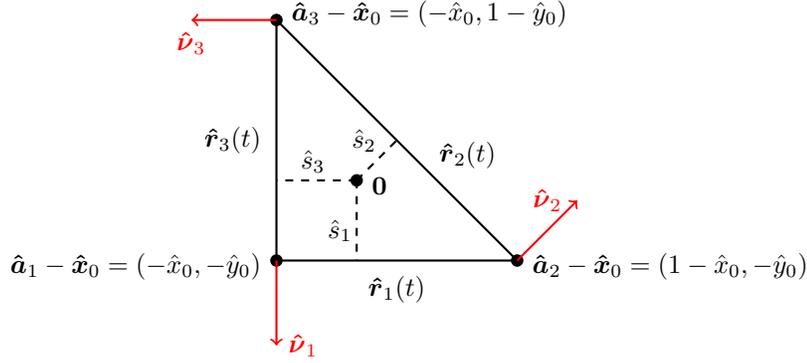

Consider, for example, the singular asymptotic term \cref{eq:Tn1} for $h=0$,
\begin{align}
T_{-1}(\bs{\hat{x}},0) = \frac{\psi(\bs{\hat{x}}_0)}{\vert J(\bs{\hat{x}}_0)(\bs{\hat{x}}-\bs{\hat{x}}_0)\vert}.
\end{align}
We translate by $\bs{\hat{x}}_0$ to make the integrand homogeneous and apply \cref{eq:homogeneous} with $r=-1$,
\begin{align}
I_{-1}(0) = \int_{\widehat{T}}T_{-1}(\bs{\hat{x}},0)dS(\bs{\hat{x}}) = \int_{\widehat{T}-\bs{\hat{x}}_0}\frac{\psi(\bs{\hat{x}}_0)}{\vert J(\bs{\hat{x}}_0)\bs{\hat{x}}\vert} dS(\bs{\hat{x}}) = \int_{\partial\widehat{T}-\bs{\hat{x}}_0}\frac{\psi(\bs{\hat{x}}_0)}{\vert J(\bs{\hat{x}}_0)\bs{\hat{x}}\vert}\bs{\hat{x}}\cdot\bs{\hat{\nu}}(\bs{\hat{x}})ds(\bs{\hat{x}}).
\end{align}
We observe that $\bs{\hat{x}}\cdot\bs{\hat{\nu}}(\bs{\hat{x}})$ is constant on each edge of $\widehat{T}-\bs{\hat{x}}_0$ and is equal to $\hat{s}_j=(\bs{\hat{a}}_j-\bs{\hat{x}}_0)\cdot\bs{\hat{\nu}}_j$, the (signed) distance between the origin and the edge with normal $\bs{\hat{\nu}}_j$; see \cref{fig:shifted_tri}. Hence, we get
\begin{align}
I_{-1}(0) =  \psi(\bs{\hat{x}}_0)\sum_{j=1}^3\hat{s}_j\int_{\partial\widehat{T}_j-\bs{\hat{x}}_0}\frac{ds(\bs{\hat{x}})}{\vert J(\bs{\hat{x}}_0)\bs{\hat{x}}\vert}.
\label{eq:In1_vtx_0}
\end{align}
We parametrize each edge,
\begin{align}
& \bs{\hat{r}}_1(t) = \left(-\hat{x}_0+\frac{t+1}{2}, -\hat{y}_0\right), \nonumber \\
& \bs{\hat{r}}_2(t) = \left(1-\hat{x}_0-\frac{t+1}{2}, -\hat{y}_0+\frac{t+1}{2}\right),  \label{eq:parametrization} \\
& \bs{\hat{r}}_3(t) =\left(-\hat{x}_0, 1-\hat{y}_0-\frac{t+1}{2}\right), \nonumber
\end{align}
with $-1\leq t\leq 1$, which leads to
\begin{align}
I_{-1}(0) = \psi(\bs{\hat{x}}_0)\sum_{j=1}^3\hat{s}_j\int_{-1}^1\frac{\vert\bs{\hat{r}}'_j(t)\vert}{\vert J(\bs{\hat{x}}_0)\bs{\hat{r}}_j(t)\vert}dt.
\label{eq:In1_param_0}
\end{align}

\paragraph{Near-singular case} There is a generalization of \cref{eq:homogeneous} and \cref{eq:In1_param_0} for functions $f(\cdot,h)$ that depend on a parameter $h\geq0$. Here, it is understood that $f(\cdot,h)$ is near-singular when $h>0$ and singular when $h=0$. Suppose that $f$ is positive homogeneous in both $\bs{x}$ and $h$, \textit{i.e.}, there exists an integer $r$ such that $f(\lambda\bs{\hat{x}},\lambda h) = \lambda^r f(\bs{\hat{x}},h)$, for all $\bs{\hat{x}}\in\widehat{T}$, $h\geq0$ and $\lambda>0$. Differentiating with respect to $\lambda$ and taking $\lambda=1$ yields
\begin{align}
\bs{\hat{x}}\cdot\nabla_{\bs{\hat{x}}}f(\bs{\hat{x}},h) + h\frac{\partial f}{\partial h}(\bs{\hat{x}},h) = rf(\bs{\hat{x}},h),
\label{eq:euler_h}
\end{align}
which is the analogue of Euler's homogeneous function theorem mentioned before. Let
\begin{align}
I(h) = \int_{\widehat{T}} f(\bs{\hat{x}},h)dS(\bs{\hat{x}}).
\end{align}
Using integration by parts together with \cref{eq:euler_h}, we can prove that
\begin{align}
hI'(h) - (r+2)I(h) = -\int_{\partial\widehat{T}} f(\bs{\hat{x}},h)\bs{\hat{x}}\cdot\bs{\hat{\nu}}(\bs{\hat{x}})ds(\bs{\hat{x}}).
\end{align}
Note that we recover \cref{eq:homogeneous} for $h=0$. For $h>0$, using variation of parameters, we get\footnote{We refer to \cite{rosen1995} for a careful derivation of these formulas.}
\begin{align}
I(h) = h^{r+2}\int_{\partial\widehat{T}}\bs{\hat{x}}\cdot\bs{\hat{\nu}}(\bs{\hat{x}})\int_h^{+\infty}\frac{f(\bs{\hat{x}},u)}{u^{r+3}}du\,ds(\bs{\hat{x}}) \quad\quad \text{($f$ homegeneous in $\bs{\hat{x}}$ and $h$)}.
\label{eq:homogeneous_h}
\end{align}
We observe, again, that  \cref{eq:homogeneous_h} approaches \cref{eq:homogeneous} as $h\to0$.

Take, for instance, the near-singular asymptotic term \cref{eq:Tn1} for $h>0$,
\begin{align}
T_{-1}(\bs{\hat{x}},h) = \frac{\psi(\bs{\hat{x}}_0)}{\sqrt{\vert J(\bs{\hat{x}}_0)(\bs{\hat{x}}-\bs{\hat{x}}_0)\vert^2 + h^2}}.
\end{align}
We translate by $\bs{\hat{x}}_0$ and apply \cref{eq:homogeneous_h} with $r=-1$,
\begin{align}
I_{-1}(h) = \int_{\widehat{T}}T_{-1}(\bs{\hat{x}},h)dS(\bs{\hat{x}}) = h\psi(\bs{\hat{x}}_0)\int_{\partial\widehat{T}-\bs{\hat{x}}_0}\bs{\hat{x}}\cdot\bs{\hat{\nu}}(\bs{\hat{x}})\int_h^{+\infty}\frac{du}{u^2\sqrt{\vert J(\bs{\hat{x}}_0)\bs{\hat{x}}\vert^2 + u^2}}ds(\bs{\hat{x}}).
\end{align}
Since, for any $h>0$ and nonzero vector $\bs{y}\in\R^3$,
\begin{align}
h\int_h^{+\infty}\frac{du}{u^2\sqrt{\vert\bs{y}\vert^2+u^2}} = \frac{\sqrt{\vert\bs{y}\vert^2+h^2} - h}{\vert\bs{y}\vert^2},
\end{align}
we arrive at the formula
\begin{align}
I_{-1}(h) = \psi(\bs{\hat{x}}_0)\sum_{j=1}^3\hat{s}_j\int_{\partial\widehat{T}_j-\bs{\hat{x}}_0}\frac{\sqrt{\vert J(\bs{\hat{x}}_0)\bs{\hat{x}}\vert^2+h^2}-h}{\vert J(\bs{\hat{x}}_0)\bs{\hat{x}}\vert^2}ds(\bs{\hat{x}}),
\label{eq:In1_vtx_h}
\end{align}
where the $\hat{s}_j$'s are those defined in \cref{fig:shifted_tri}. Using the parametrizations in \cref{eq:parametrization} yields
\begin{align}
I_{-1}(h) = \psi(\bs{\hat{x}}_0)\sum_{j=1}^3\hat{s}_j\int_{-1}^1\frac{\sqrt{\vert J(\bs{\hat{x}}_0)\bs{\hat{r}}_j(t)\vert^2+h^2} - h}{\vert J(\bs{\hat{x}}_0)\bs{\hat{r}}_j(t)\vert^2}\vert\bs{\hat{r}}'_j(t)\vert dt.
\label{eq:In1_param_h}
\end{align}
Note that the formula \cref{eq:In1_param_h} approaches \cref{eq:In1_param_0} as $h\to0$.

\subsection{Transplanted Gauss quadrature (Step 5)}\label{sec:step5}

Let us start this subsection by highlighting something crucial about the first integral in \cref{eq:In1_param_h}---similar remarks hold for the other two. As mentioned when introducing Step 5, if the origin is far from $\bs{\hat{r}}_1$ ($\hat{s}_1\gg0$ in \cref{fig:shifted_tri}), then the integrand is analytic and $n$-point Gauss quadrature converges exponentially fast~\cite[Thm.~19.3]{trefethen2019}; the quadrature error decreases like $\OO(\rho^{-2n})$, and $\hat{s}_1\gg0$ yields $\rho\gg1$. However, if the origin is close to $\bs{\hat{r}}_1$ ($\hat{s}_1\approx0$), the integrand is analytic but nearly singular, convergence is exponential but~$\rho\approx1$. To circumvent this issue, we use transplanted quadrature rules obtained from conformal maps \cite{hale2008}. 

Suppose $f$ is an analytic function in some Bernstein ellipse $E_\rho$ for some $\rho>1$,\footnote{The ellipse $E_\rho$ is the set of points $(x,y)$ such that $x^2/a_\rho^2+y^2/b_\rho^2=1$ with $a_\rho=1/2(\rho+\rho^{-1})$ and $b_\rho=1/2(\rho-\rho^{-1})$.} and that it has complex singularities near $[-1,1]$. We seek to calculate
\begin{align}
I(f) = \int_{-1}^1f(t)dt.
\label{eq:int}
\end{align}
In this case, $n$-point \textit{Gauss quadrature},
\begin{align}
I(f) \approx I_n(f) = \sum_{k=1}^nw_kf(t_k)
\label{eq:gauss_quad}
\end{align}
with Gauss weights $\{w_k\}$ and nodes $\{t_k\}$, fails to converge rapidly. The idea behind transplanted quadrature goes like this. Let $\Omega_\rho$ be an open set in the complex plane containing $[-1,1]$ inside of which the function $f$ is analytic and let $g$ be an analytic function in $E_\rho$ satisfying
\begin{align}
g(E_\rho)\subset\Omega_\rho, \quad g(-1) = -1, \quad g(1) = 1.
\end{align}
By Cauchy's theorem for analytic functions, the integral of $f$ over $g([-1,1])$---an analytic curve in the complex plane---is the same as the integral of $f$ over $[-1,1]$, thus we have
\begin{align}
I(f) = \int_{-1}^1g'(t)f(g(t))dt.
\label{eq:trans_int}
\end{align}
We can approximate the above integral by the following \textit{transplanted Gauss quadrature rule},
\begin{align}
I(f) \approx I_n(f) = \sum_{k=1}^nw_kg'(t_k)f(g(t_k)).
\label{eq:trans_quad}
\end{align}

In the following, we shall consider functions $f_{\mu,\nu}$ of the form
\begin{align}
f_{\mu,\nu}(t) = \frac{1}{\sqrt{(t - \mu)^2 + \nu^2}}, \quad \mu\in\R, \quad \nu>0, \quad t\in[-1,1],
\label{eq:f_munu}
\end{align}
with complex singularities at $t=\mu\pm i\nu$ and exact integral $I(\mu,\nu)=a_{\mu,\nu} + b_{\mu,\nu}$ with
\begin{align}
a_{\mu,\nu} = \mrm{arcsinh}\left(\frac{1-\mu}{\nu}\right), \quad b_{\mu,\nu} = \mrm{arcsinh}\left(\frac{1+\mu}{\nu}\right).
\end{align}
We will utilize the map from \cite{tee2006} defined by
\begin{align}
g_{\mu,\nu}(z) = \mu + \nu\sinh\left[(a_{\mu,\nu} + b_{\mu,\nu})\frac{z-1}{2} + a_{\mu,\nu}\right].
\label{eq:g_munu}
\end{align}
For this particular real-valued function $g$, which verifies $g([-1,1])=[-1,1]$ and $g'>0$ on $[-1,1]$, the transplanted integral \cref{eq:trans_int} can be retrieved via the substitution $u=g(t)$, and the transplanted formula \cref{eq:trans_quad} is simply Gauss quadrature applied to this change of variables.

Functions of the form of \cref{eq:f_munu} are particularly relevant for us. For example, if $h=0$, $\bs{\hat{x}}_0=(0,\epsilon)$ and $J(\bs{\hat{x}}_0)=I_{3\times2}$, the integral on $\bs{\hat{r}}_1$ in \cref{eq:In1_param_h} reads 
\begin{align}
\int_{-1}^1\frac{\vert \bs{\hat{r}}_1'(t)\vert}{\vert\bs{\hat{r}}_1(t)\vert}dt = \int_{-1}^1\frac{dt}{\sqrt{(t+1)^2 + 4\epsilon^2}} = I(-1,2\epsilon),
\label{eq:r1_integrand1}
\end{align}
with complex singularities at $t= -1\pm2i\epsilon$. Since the largest Bernstein ellipse inside which it can be analytically continued has a parameter $\rho\approx1 + \sqrt{2\epsilon}$, $n$-point Gauss quadrature converges exponentially at a slow rate $(1+\sqrt{2\epsilon})^{-2n}$---when $\epsilon$ is small, the error is almost constant. Transplanted Gauss quadrature with $g_{-1,2\epsilon}$, on the other hand, is exact; see \cref{fig:quad_conv1}. Note that, for $\bs{\hat{x}}_0 = (\hat{x}_0,\epsilon)$ instead of $(0,\epsilon)$, the integral on $\bs{\hat{r}}_1$ has singularities at $t=-1+2\hat{x}_0 \pm 2i\epsilon$ and equals $I(-1+2\hat{x}_0,2\epsilon)$.

\begin{figure}
\centering
\includegraphics[scale=0.5]{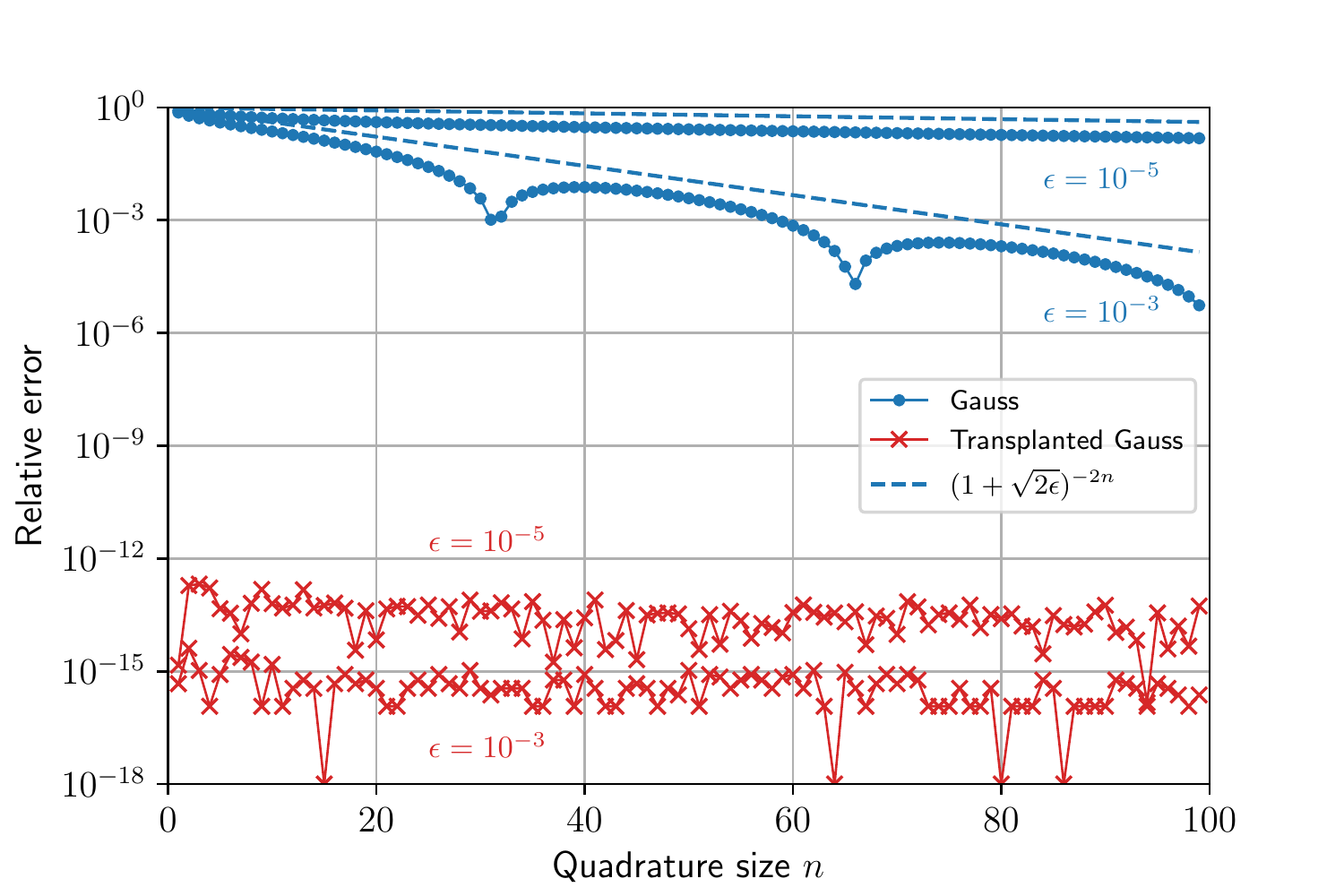}
\caption{\textit{Gauss quadrature \cref{eq:gauss_quad} converges exponentially for the function $t\mapsto1/\sqrt{(t+1)^2+4\epsilon^2}$, but at a slow rate $(1+\sqrt{2\epsilon})^{-2n}$. As $\epsilon\to0$, the error ``plateaus'' at $\OO(1)$ values. Transplanted Gauss quadrature \cref{eq:trans_quad} with the map $g_{-1,2\epsilon}$ defined in \cref{eq:g_munu} gives machine precision from $n=1$.}}
\label{fig:quad_conv1}
\end{figure}

Let us emphasize that, in general, the integrals in \cref{eq:In1_param_h} will not be as simple as \cref{eq:r1_integrand1}---the term under the square root will be a more complicated polynomial in $t$ and $\epsilon$ because of the Jacobian. We could compute the exact singularities $\mu\pm i\nu$, construct the corresponding $g_{\mu,\nu}$ and would observe full accuracy from $n=1$ for transplanted Gauss quadrature. However, for simplicity, we use the same transplanted rule with $\mu=-1+2\hat{x}_0$ and $\nu=2\epsilon$ for all integrands with $1/\vert J(\bs{\hat{x}}_0)\bs{\hat{r}}_1(t)\vert$ near-singularity, independently of the value of $J(\bs{\hat{x}}_0)$. By doing so, we only misplace the singularities by a factor $\OO(\epsilon)$.\footnote{For example, for a Jacobian $J$ with columns $J_1=(a,c,0)$ and $J_2=(b,d,0)$, the singularity is at 
\begin{align}
t=-1+2\hat{x}_0+2\epsilon\frac{ab+cd}{a^2+c^2} \pm 2i\epsilon\frac{ad-bc}{a^2+c^2}.
\end{align}} (Similarly, on $\bs{\hat{r}}_2$ and $\bs{\hat{r}}_3$, we choose $\mu=-1+2\hat{y}_0$ and $\nu=2\hat{s}_2$, and $\mu=-1+2(1-\hat{y}_0)$ and $\nu=2\hat{s}_3$ for all integrands.) To illustrate this, we use transplanted Gauss quadrature with $g_{0,2\epsilon}$ for integrating $f_{0,5\epsilon}$ in \cref{fig:quad_conv2}---transplanted Gauss quadrature is no longer exact but converges much faster than Gauss quadrature. We prove these observations in the following theorem.

\begin{figure}
\centering
\includegraphics[scale=0.5]{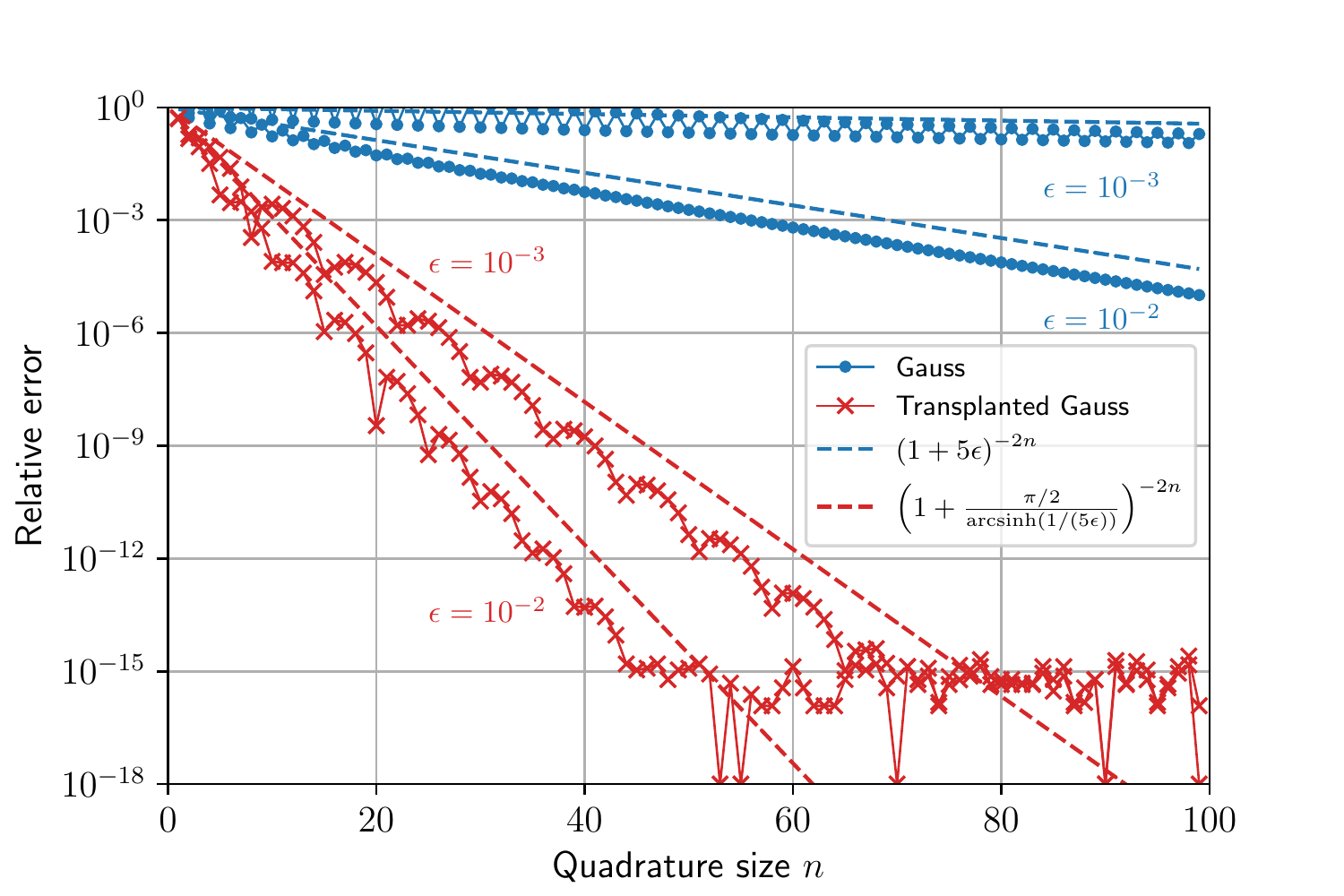}
\caption{\textit{Gauss quadrature \cref{eq:gauss_quad} converges exponentially for the function $t\mapsto1/\sqrt{t^2+25\epsilon^2}$, but at a slow rate $(1+5\epsilon)^{-2n}$. Transplanted Gauss quadrature \cref{eq:trans_quad} with the map $g_{0,2\epsilon}$ defined in \cref{eq:g_munu} is no longer exact but converges exponentially at a faster rate, described in \cref{thm:convergence}.}}
\label{fig:quad_conv2}
\end{figure}

\begin{theorem}\label{thm:convergence} 
Let $-1\leq\mu\leq1$, $0<\nu<1$, $f_{\mu,\nu}$ and $g_{\mu,\nu}$ as in \cref{eq:f_munu}--\cref{eq:g_munu}, and
\begin{align}
C_\nu = \frac{\pi/2}{\mrm{arcsinh}(2/\nu)}.
\end{align}
Let $\rho_0(x) = x + \sqrt{1 + x^2}$ and $\rho_1(x) = \rho_0(x) + \sqrt{2x\rho_0(x)}$. In the following statements, by super-exponential we mean that the $n$-point quadrature error $\vert I(\mu,\nu) - I_n(\mu,\nu)\vert$ decreases like $\OO(\rho^{-2n})$ for all $\rho>1$, and by exponential we mean $\OO(\rho^{-2n})$ for some $\rho>1$.

$(1)$ Transplanted Gauss quadrature \cref{eq:trans_quad} with $g_{\mu,\nu}$ applied to the product $f_{\mu,\nu}^k\times\psi$ converges exponentially for any entire function $\psi$ and nonzero integer power $k$ with $\rho=\rho_0(C_{2\nu})$ if $\mu=0$, $\rho=\rho_1(C_\nu)$ if $\mu=\pm1$, and $\rho_0(C_{2\nu})<\rho<\rho_1(C_\nu)$ if $-1<\mu\neq0<1$. This gives, as $\nu\to0$,
\begin{align}
& \rho \approx 1 + \frac{\pi/2}{\mrm{arcsinh}(1/\nu)}, && \text{if $\mu=0$}, \nonumber \\
& \rho \approx 1 + \sqrt{\frac{\pi}{\mrm{arcsinh}(2/\nu)}}, && \text{if $\mu=\pm1$}, \\
& 1 + \frac{\pi/2}{\mrm{arcsinh}(1/\nu)} \lesssim \, \rho \, \lesssim 1 + \sqrt{\frac{\pi}{\mrm{arcsinh}(2/\nu)}}, && \text{if $-1<\mu\neq0<1$}. \nonumber
\end{align}
When $k\leq1$, the convergence improves to being super-exponential, and when $k=1$ and $\psi$ is a constant, the quadrature is exact with a single quadrature point/weight.

$(2)$ Gauss quadrature \cref{eq:gauss_quad} applied to any of the functions in $(1)$ converges exponentially with $\rho=\rho_0(\nu)$ if $\mu=0$, $\rho=\rho_1(\nu/2)$ if $\mu=\pm1$, and $\rho_0(\nu)<\rho<\rho_1(\nu/2)$ if $-1<\mu\neq0<1$. This gives the following approximations for $\rho$ as $\nu\to0$,
\begin{align}
& \rho \approx 1+\nu, && \text{if $\mu=0$}, \nonumber \eqvsp
& \rho \approx 1+\sqrt{\nu}, && \text{if $\mu=\pm1$}, \eqvsp
& 1+\nu \lesssim \, \rho \, \lesssim 1+\sqrt{\nu}, && \text{if $-1<\mu\neq0<1$}. \nonumber
\end{align}

$(3)$ Let $\mu'=\mu+\dmu\nu$ and $\nu'=\dnu\nu$ for some $\dmu\in\R$ and $\dnu>0$, and let
\begin{align}
0\leq I = \frac{\mrm{Im}\left(\mrm{arcsinh}\left(\dmu + i\dnu\right)\right)}{\pi/2}\leq1.
\end{align}
Transplanted Gauss quadrature \cref{eq:trans_quad} with $g_{\mu,\nu}$ applied to the product $f_{\mu',\nu'}^k\times\psi$ converges exponentially for any entire function $\psi$ and nonzero integer power $k$ with $\rho\geq\rho_0(2IC_{\nu})$ if $\mu=\pm1$ and $\rho\geq\rho_0(IC_{2\nu})$ if $\vert\mu\vert<1$. This gives the following approximations for $\rho$ as $\nu\to0$,
\begin{align}
& \rho \gtrsim 1 + \frac{I\pi}{\mrm{arcsinh}(2/\nu)}, && \text{if $\mu=\pm1$}, \nonumber \\
& \rho \gtrsim 1 + \frac{I\pi/2}{\mrm{arcsinh}(1/\nu)}, && \text{if $\vert\mu\vert<1$}.
\end{align}
\end{theorem}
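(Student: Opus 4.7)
The plan is to apply the standard Bernstein-ellipse convergence theorem for Gauss quadrature \cite[Thm.~19.3]{trefethen2019}: if the integrand extends analytically into the ellipse $E_\rho$ and is bounded there, the $n$-point quadrature error is $\OO(\rho^{-2n})$. In each of parts $(1)$--$(3)$, the task therefore reduces to locating the complex singularities of the (possibly transplanted) integrand and identifying the largest $\rho$ such that $E_\rho$ avoids them. The recurring quantitative tool is that a point $w=x+iy$ lies on $\partial E_\rho$ iff $x^2/a_\rho^2+y^2/b_\rho^2=1$, with $a_\rho^2-b_\rho^2=1$ and $\rho=a_\rho+b_\rho$.

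For part $(2)$, since $\psi$ is entire the singularities of $f_{\mu,\nu}^k\psi$ lie only at $\mu\pm i\nu$. For $\mu=0$ the ellipse equation forces $b_\rho=\nu$, so $\rho=\rho_0(\nu)$. For $\mu=\pm 1$, solving the resulting quartic in $b_\rho^2$ and using the algebraic identity $\rho_0(x)^2=1+2x\rho_0(x)$ yields $b_\rho^2=\nu\rho_0(\nu/2)$ and $a_\rho=\rho_0(\nu/2)$, whence $\rho=\rho_0(\nu/2)+\sqrt{\nu\rho_0(\nu/2)}=\rho_1(\nu/2)$. For $|\mu|\in(0,1)$, $\rho$ sits strictly between these two extremes by continuity and monotonicity of $|w+\sqrt{w^2-1}|$ as the singularity slides along $\{\mrm{Im}=\nu\}$; the stated small-$\nu$ expansions $\rho_0(\nu)\approx 1+\nu$ and $\rho_1(\nu/2)\approx 1+\sqrt{\nu}$ then follow by Taylor expansion.

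For part $(1)$, the key algebraic reduction is $g_{\mu,\nu}(z)-\mu=\nu\sinh[s(z)]$ with $s(z)=\tfrac{\alpha+\beta}{2}(z-1)+\alpha$ (where $\alpha=a_{\mu,\nu}$, $\beta=b_{\mu,\nu}$); together with $g'_{\mu,\nu}(z)=\nu\tfrac{\alpha+\beta}{2}\cosh[s(z)]$ and $\cosh^2=1+\sinh^2$, this gives
\begin{align*}
g'_{\mu,\nu}(z)\,f_{\mu,\nu}(g_{\mu,\nu}(z))^k=\frac{\alpha+\beta}{2\nu^{k-1}}\cosh^{1-k}[s(z)].
\end{align*}
When $k\leq 1$ this is entire (and for $k=1$ with $\psi$ constant it collapses to the constant $(\alpha+\beta)\psi/2$, integrated exactly by a single Gauss node), giving super-exponential convergence. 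When $k\geq 2$, the only singularities come from the zeros of $\cosh[s(z)]$, at $s(z)=\pm i\pi/2$, i.e., $z_*=\tfrac{\beta-\alpha}{\alpha+\beta}\pm i\tfrac{\pi}{\alpha+\beta}$. For $\mu=0$ we have $\alpha=\beta$ and $\pi/(\alpha+\beta)=C_{2\nu}$, so $z_*=\pm iC_{2\nu}$ and part $(2)$'s formula yields $\rho=\rho_0(C_{2\nu})$; for $\mu=\pm 1$ one of $\alpha,\beta$ vanishes and $\pi/(\alpha+\beta)=2C_\nu$, placing $z_*$ at $\pm 1\pm 2iC_\nu$, whence $\rho=\rho_1(C_\nu)$. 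The intermediate range follows from the same monotonicity argument.

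Part $(3)$ proceeds identically. Singularities of $f_{\mu',\nu'}(g_{\mu,\nu}(z))$ occur where $g_{\mu,\nu}(z)=\mu'\pm i\nu'$, i.e., $\sinh[s(z)]=\dmu\pm i\dnu$, giving $\mrm{Im}(z_*)=2\,\mrm{Im}(\mrm{arcsinh}(\dmu+i\dnu))/(\alpha+\beta)=I\pi/(\alpha+\beta)$. Using $\alpha+\beta=\mrm{arcsinh}(2/\nu)=\pi/(2C_\nu)$ for $\mu=\pm 1$ and the concavity-of-arcsinh bound $\alpha+\beta\leq 2\,\mrm{arcsinh}(1/\nu)=\pi/C_{2\nu}$ for $|\mu|<1$ yields $|\mrm{Im}(z_*)|\geq 2IC_\nu$ and $|\mrm{Im}(z_*)|\geq IC_{2\nu}$ respectively. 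Since $E_{\rho_0(y_0)}$ has maximal imaginary part exactly $y_0$, any singularity with $|\mrm{Im}|\geq y_0$ lies outside $E_{\rho_0(y_0)}$ and hence guarantees $\rho\geq\rho_0(y_0)$, giving the claimed lower bounds. The main obstacle throughout is the bookkeeping of the various explicit Bernstein-ellipse identities and their $\nu\to 0$ asymptotic expansions---the conceptual content reduces to Trefethen's theorem combined with the identity $g_{\mu,\nu}(z)-\mu=\nu\sinh[s(z)]$.
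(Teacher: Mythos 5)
Your proposal is correct and follows essentially the same route as the paper's proof: invoke the Bernstein-ellipse convergence theorem, use the identity $g_{\mu,\nu}(z)-\mu=\nu\sinh[s(z)]$ to collapse the transplanted integrand to a power of $\cosh$, and locate the nearest complex singularities to read off $\rho$ in each case. You in fact supply slightly more detail than the paper in places (the explicit quartic giving $\rho_1(\nu/2)$ in part (2), and the concavity bound on $a_{\mu,\nu}+b_{\mu,\nu}$ in part (3)), but the argument is the same.
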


\begin{proof}
(1) Transplanted quadrature with $g_{\mu,\nu}$ yields integrands of the form
\begin{align}
g'_{\mu,\nu}(t)f^k_{\mu,\nu}(g_{\mu,\nu}(t))\psi(g_{\mu,\nu}(t)) = \frac{a_{\mu,\nu}+b_{\mu,\nu}}{2}\cosh^{1-k}\left[(a_{\mu,\nu}+b_{\mu,\nu})\frac{t-1}{2}+a_{\mu,\nu}\right]\psi(g_{\mu,\nu}(t)).
\label{eq:trans_integrand}
\end{align}
The closest singularities occur when the input of the hyperbolic cosine equals $\pm i\pi/2$, that is,
\begin{align}
t = -1 + \frac{2b_{\mu,\nu}}{a_{\mu,\nu}+b_{\mu,\nu}} \pm i\frac{\pi}{a_{\mu,\nu}+b_{\mu,\nu}}.
\label{eq:trans_singularities}
\end{align}
If $\mu=0$, then $a_{\mu,\nu} = b_{\mu,\nu} = \mrm{arcsinh}(1/\nu)$ and $t = \pm iC_{2\nu}$. In this case, the largest Bernstein ellipse $E_{\rho}$ verifies $\rho = \rho_0(C_{2\nu})$. If $\mu=-1$, then $b_{\mu,\nu}=0$, $a_{\mu,\nu}=\mrm{arcsinh}(2/\nu)$ and $t = -1 \pm 2iC_\nu$. Here, the largest ellipse $E_{\rho}$ verifies $\rho = \rho_1(C_\nu)$. Similar calculations can be carried out for $\mu=1$. For the other $\mu$'s, the singularities lie outside $E_{\rho_0(C_{2\nu})}$ but inside $E_{\rho_1(C_\nu)}$, and therefore $\rho_0(C_{2\nu})<\rho<\rho_1(C_\nu)$. (The approximations as $\nu\to0$ are straightforward asymptotic expansions.) When $k\leq1$, the function in \cref{eq:trans_integrand} is a product-composition of entire functions---convergence is super-exponential. Finally, when $k=1$ and $\psi$ is a constant, \cref{eq:trans_integrand} is a constant---the quadrature is exact with $n=1$.

(2) The singularities are at $t=\mu\pm i\nu$. The formulas can be derived with the same techniques.

(3) Singularities occur when the denominator of the function $f_{\mu',\nu'}\circ g_{\mu,\nu}$ vanishes, \textit{i.e.},
\begin{align}
\left(X(t) - \dmu\right)^2 + \dnu^2 = 0, 
\quad X(t) = \sinh\left[(a_{\mu,\nu} + b_{\mu,\nu})\frac{t-1}{2} + a_{\mu,\nu}\right].
\end{align}
This leads to $X(t) = \dmu\pm i\dnu$ and a couple of candidate pairs for the closest singularities,
\begin{align}
t = -1 + \frac{2b_{\mu,\nu} + \pi R}{a_{\mu,\nu} + b_{\mu,\nu}} \pm i\frac{\pi I}{a_{\mu,\nu} + b_{\mu,\nu}}, \quad 
t = -1 + \frac{2b_{\mu,\nu} - \pi R}{a_{\mu,\nu} + b_{\mu,\nu}} \pm i\frac{\pi(2 - I)}{a_{\mu,\nu} + b_{\mu,\nu}},
\end{align}
with
\begin{align}
R = \frac{\mrm{Re}\left(\mrm{arcsinh}\left(\dmu + i\dnu\right)\right)}{\pi/2}, \quad I = \frac{\mrm{Im}\left(\mrm{arcsinh}\left(\dmu + i\dnu\right)\right)}{\pi/2}.
\end{align}
If $\mu=-1$, either $t=-1 + 2RC_\nu \pm 2iIC_\nu$ or $t=-1 - 2RC_\nu \pm 2i(2-I)C_\nu$ may be the closest singularities. These both lie outside the ellipse associated with purely imaginary singularities, for which $\rho=\rho_0(2IC_\nu)$, and, therefore, $\rho\geq\rho_0(2IC_\nu)$ (crude estimate). Similar calculations for $\mu=1$. If $\mu=0$, then $t = RC_{2\nu} \pm iIC_{2\nu}$ are the closest singularities since $2-I\geq I$. These lie outside the ellipse corresponding to purely imaginary singularities, for which $\rho=\rho_0(IC_{2\nu})$; hence, we arrive at $\rho\geq\rho_0(IC_{2\nu})$. For the other $\mu$'s, the singularities lie outside the ellipse corresponding to $\mu=0$. 
\end{proof}

Let us conclude this section by noting that the integral on the edge $\bs{\hat{r}}_1$ in \cref{eq:In1_param_h} does not need to be computed when the origin is on $\bs{\hat{r}}_1$, that is, when $\epsilon=0$, since it is multiplied by $\hat{s}_1=\epsilon$ and
\begin{align}
\epsilon\,I(-1,2\epsilon) = \epsilon\,\mrm{arcsinh}\left(\frac{1}{\epsilon}\right) = \epsilon\log\left(\frac{2}{\epsilon}\right) + \frac{1}{4}\epsilon^3 + \ldots \underset{\epsilon\to0}{\longrightarrow} 0.
\end{align}
The latter observation also shows that the contribution $\epsilon I(-1,2\epsilon)$ on $\bs{\hat{r}}_1$ becomes negligible with respect to those on $\bs{\hat{r}}_2$ and $\bs{\hat{r}}_3$ as $\epsilon\to0$. Therefore, if we were to employ Gauss quadrature, $I(-1,2\epsilon)$ would be computed with an error $\OO(1)$ but this would only introduce an error $\OO(\epsilon\log(2/\epsilon))$ in \cref{eq:In1_param_h}. 

\section{Extension to higher-order approximations}\label{sec:high-order}

We present in this section our method based on high-order Taylor series. The first two steps are the same---we go directly to Step 3.

\paragraph{Step 3. Taylor expanding/subtracting} Let $\delta\bs{\hat{x}} = \bs{\hat{x}} - \bs{\hat{x}}_0 = (\delta\hat{x}_1,\delta\hat{x}_2)$, $ J_0 = J(\bs{\hat{x}}_0)$, and
\begin{align}
& \psi_0 = \psi(\bs{\hat{x}}_0) = \OO(1), \nonumber \eqvspp
& \psi'_0(\delta\bs{\hat{x}}) = \psi_{\hat{x}_1}(\bs{\hat{x}}_0)\delta\hat{x}_1 + \psi_{\hat{x}_2}(\bs{\hat{x}}_0)\delta\hat{x}_2 = \OO(\delta\hat{x}), \eqvspp
& \psi''_0(\delta\bs{\hat{x}}) = \frac{1}{2}\psi_{\hat{x}_1\hat{x}_1}(\bs{\hat{x}}_0)\delta\hat{x}_1^2 + \frac{1}{2}\psi_{\hat{x}_2\hat{x}_2}(\bs{\hat{x}}_0)\delta\hat{x}_2^2 + \psi_{\hat{x}_1\hat{x}_2}(\bs{\hat{x}}_0)\delta\hat{x}_1\delta\hat{x}_2 = \OO(\delta\hat{x}^2). \nonumber
\end{align}
Besides the $\OO(\delta{\hat{x}}^{-1})$ term $T_{-1}$, we also compute the $\OO(1)$ and $\OO(\delta\hat{x})$ terms $T_0$ and $T_1$,
\begin{align}
T_0(\bs{\hat{x}},h) & = \frac{\psi'_0(\delta\bs{\hat{x}})}{\left[\vert J_0\delta\bs{\hat{x}}\vert^2 + h^2\right]^{\frac{1}{2}}}
- \frac{h\psi_0}{2}\sum_{i=1}^3a_i\frac{\delta\hat{x}_1^{3-i}\delta\hat{x}_2^{i-1}}{\left[\vert J_0\delta\bs{\hat{x}}\vert^2 + h^2\right]^{\frac{3}{2}}}
- \frac{\psi_0}{2}\sum_{i=1}^4c_i\frac{\delta\hat{x}_1^{4-i}\delta\hat{x}_2^{i-1}}{\left[\vert J_0\delta\bs{\hat{x}}\vert^2 + h^2\right]^{\frac{3}{2}}}, \nonumber \eqvspp
T_1(\bs{\hat{x}},h) & = \frac{\psi''_0(\delta\bs{\hat{x}})}{\left[\vert J_0\delta\bs{\hat{x}}\vert^2 + h^2\right]^{\frac{1}{2}}}
+ h\sum_{i=1}^4e_i\frac{\delta\hat{x}_1^{4-i}\delta\hat{x}_2^{i-1}}{\left[\vert J_0\delta\bs{\hat{x}}\vert^2 + h^2\right]^{\frac{3}{2}}} 
+ h^2\sum_{i=1}^5f_i\frac{\delta\hat{x}_1^{5-i}\delta\hat{x}_2^{i-1}}{\left[\vert J_0\delta\bs{\hat{x}}\vert^2 + h^2\right]^{\frac{5}{2}}} \label{eq:T} \eqvspp
& + \sum_{i=1}^5g_i\frac{\delta\hat{x}_1^{5-i}\delta\hat{x}_2^{i-1}}{\left[\vert J_0\delta\bs{\hat{x}}\vert^2 + h^2\right]^{\frac{3}{2}}}
+ \sum_{i=1}^7h_i\frac{\delta\hat{x}_1^{7-i}\delta\hat{x}_2^{i-1}}{\left[\vert J_0\delta\bs{\hat{x}}\vert^2 + h^2\right]^{\frac{5}{2}}}, \nonumber
\end{align}
and add them to/subtract them from \cref{eq:step1} to regularize the 2D integrals,
\begin{align}
I(\bs{x}_0) & = \int_{\widehat{T}}T_{-1}(\bs{\hat{x}},h)dS(\bs{\hat{x}}) + \int_{\widehat{T}}T_0(\bs{\hat{x}},h)dS(\bs{\hat{x}}) + \int_{\widehat{T}}T_1(\bs{\hat{x}},h)dS(\bs{\hat{x}}) \nonumber \\
& + \int_{\widehat{T}}\left[\frac{\psi(\bs{\hat{x}})}{\vert F(\bs{\hat{x}}) - \bs{x}_0\vert} - T_{-1}(\bs{\hat{x}},h) - T_0(\bs{\hat{x}},h) - T_1(\bs{\hat{x}},h)\right]dS(\bs{\hat{x}}).
\end{align}
Check out \cref{sec:taylor_R2} and \cref{sec:taylor_psiRn1} for the coefficients in \cref{eq:T}.

\paragraph{Steps 4--5. Continuation approach and transplanted Gauss quadrature} Let
\begin{align}
I_0(h) = \int_{\widehat{T}}T_0(\bs{\hat{x}},h)dS(\bs{\hat{x}}), \quad
I_1(h) = \int_{\widehat{T}}T_1(\bs{\hat{x}},h)dS(\bs{\hat{x}}).
\label{eq:I_tri}
\end{align}
Using the continuation approach, we obtain 1D integrals along the boundary $\partial\widehat{T}$ of $\widehat{T}$, which are, again, analytic but nearly singular; see \cref{sec:I0_I1}. The story is the same---Gauss quadrature will be inefficient, while transplanted Gauss quadrature will perform well, as guaranteed by \cref{thm:convergence}.

\section{Numerical examples}\label{sec:numerics}

\paragraph{Singular/near-singular 2D integrals over a quadratic triangle} Consider the triangle $\mathcal{T}$ given~by
\begin{align}
& \bs{a}_1 = \bs{\hat{a}}_1 = (0,0,0), \quad\quad \bs{a}_4 = \bs{\hat{a}}_4 = (1/2,0,0), \nonumber \\
& \bs{a}_2 = \bs{\hat{a}}_2 = (1,0,0), \quad\quad \bs{a}_5 = (a,b,c) \neq \bs{\hat{a}}_5=(1/2,1/2,0), \label{eq:exp_tri}\\
& \bs{a}_3 = \bs{\hat{a}}_3 = (0,1,0), \quad\quad \bs{a}_6 = \bs{\hat{a}}_6 = (0,1/2,0), \nonumber
\end{align}
which is displayed in \cref{fig:exp_tri}. The mapping $F:\widehat{T}\mapsto\mathcal{T}$ and its Jacobian matrix $J$ are given by
\begin{align}
F(\bs{\hat{x}}) = \begin{pmatrix}
\hat{x}_1 + 2(2a-1)\hat{x}_1\hat{x}_2 \eqvsp
\hat{x}_2 + 2(2b-1)\hat{x}_1\hat{x}_2 \eqvsp
4c\hat{x}_1\hat{x}_2
\end{pmatrix}, \quad
J(\bs{\hat{x}}) = \begin{pmatrix}
1 + 2(2a-1)\hat{x}_2 & 2(2a-1)\hat{x}_1 \eqvsp
2(2b-1)\hat{x}_2 & 1 + 2(2b-1)\hat{x}_1 \eqvsp
4c\hat{x}_2 & 4c\hat{x}_1
\end{pmatrix}.
\end{align}
We take $a=0.6$, $b=0.7$ and $c=0.5$, and compute the following integrals for different values of $\bs{x}_0$,
\begin{align}
I(\bs{x}_0) = \int_\mathcal{T}\frac{dS(\bs{x})}{\vert\bs{x}-\bs{x}_0\vert}.
\label{eq:Ix0}
\end{align}

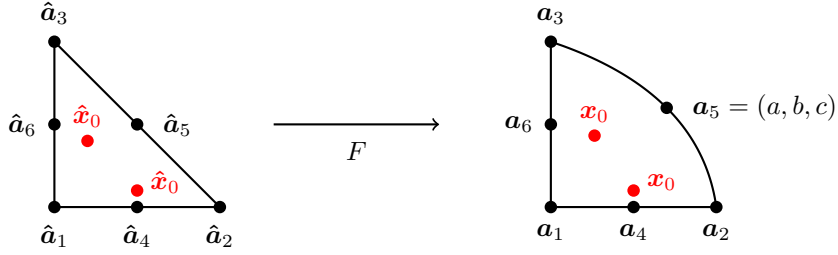
\begin{figure}
\centering
\begin{tikzpicture}[scale = 1.10]
      \draw[thick] (0, 0) -- (2, 0) {};
      \draw[thick] (0, 0) -- (0, 2) {};
      \draw[thick] (2, 0) -- (0, 2) {};
      \node[fill, circle, scale=0.5] at (0, 0) {};
      \node[anchor=north] at (0, 0-0.1) {$\bs{\hat{a}}_1$};
      \node[fill, circle, scale=0.5] at (2, 0) {};
      \node[anchor=north] at (2, 0-0.1) {$\bs{\hat{a}}_2$};
      \node[fill, circle, scale=0.5] at (0, 2) {};
      \node[anchor=south] at (0, 2+0.1) {$\bs{\hat{a}}_3$};
      \node[fill, circle, scale=0.5] at (1, 0) {};
      \node[anchor=north] at (1, 0-0.1) {$\bs{\hat{a}}_4$};
      \node[fill, circle, scale=0.5] at (1, 1) {};
      \node[anchor=west] at (1+0.2, 1) {$\bs{\hat{a}}_5$};
      \node[fill, circle, scale=0.5] at (0, 1) {};
      \node[anchor=east] at (0-0.1, 1) {$\bs{\hat{a}}_6$};
      \node[fill, circle, scale=0.5, color=red] at (0.4, 0.8) {};
      \node[anchor=south, color=red] at (0.4, 0.8+0.05) {$\bs{\hat{x}}_0$};
      \node[fill, circle, scale=0.5, color=red] at (1, 0.2) {};
      \node[anchor=west, color=red] at (1+0.05, 0.2+0.1) {$\bs{\hat{x}}_0$};      
      \draw[thick, ->] (1.65+1, 1) -- (3.65+1, 1) {};
      \node[anchor=north] at (2.65+1, 1-0.1) {$F$};
      \draw[thick] (4+2, 0) -- (6+2, 0) {};
      \draw[thick] (4+2, 0) -- (4+2, 2) {};
      \draw[thick, domain=-1:1, smooth, variable=\theta, black] plot ({6+2*(1-(\theta+1)/2) + 2*2*(2*0.7-1)*(1-(\theta+1)/2)*(\theta+1)/2}, {2*(\theta+1)/2 + 2*2*(2*0.6-1)*(1-(\theta+1)/2)*(\theta+1)/2});
      \node[fill, circle, scale=0.5] at (4+2, 0) {};
      \node[anchor=north] at (4+2, 0-0.1) {$\bs{a}_1$};
      \node[fill, circle, scale=0.5] at (6+2, 0) {};
      \node[anchor=north] at (6+2, 0-0.1) {$\bs{a}_2$};
      \node[fill, circle, scale=0.5] at (4+2, 2) {};
      \node[anchor=south] at (4+2, 2+0.1) {$\bs{a}_3$};
      \node[fill, circle, scale=0.5] at (5+2, 0) {};
      \node[anchor=north] at (5+2, 0-0.1) {$\bs{a}_4$};
      \node[fill, circle, scale=0.5] at (6+2*0.7, 2*0.6) {};
      \node[anchor=west] at (6+2*0.7+0.2, 2*0.6) {$\bs{a}_5=(a,b,c)$};
      \node[fill, circle, scale=0.5] at (4+2, 1) {};
      \node[anchor=east] at (4-0.1+2, 1) {$\bs{a}_6$};
      \node[fill, circle, scale=0.5, color=red] at (6+2*0.264, 2*0.432) {};
      \node[anchor=south, color=red] at (6+2*0.264, 2*0.432+0.05) {$\bs{x}_0$};
      \node[fill, circle, scale=0.5, color=red] at (1+6, 0.2) {};
      \node[anchor=west, color=red] at (1+0.05+6, 0.2+0.1) {$\bs{x}_0$};      
\end{tikzpicture}
\caption{\textit{The quadratic triangle we consider is on the right---we choose $c=0.5$ to get a 3D triangle. We take points $\bs{\hat{x}}_0=(0.2,0.4)$ (near the center) and $\bs{\hat{x}}_0=(0.5,\epsilon)$ for small $\epsilon$ (near the lower edge).}}
\label{fig:exp_tri}
\end{figure}

We first take points $\bs{x}_0=F(0.2,0.4)$ (singular) and $\bs{x}_0=F(0.2,0.4)+10^{-4}\bs{z}$ (near-singular). These are points near the center of $\mathcal{T}$; the first one is on $\mathcal{T}$ and the second one is slightly above it. We compute the integrals \cref{eq:Ix0} in Mathematica to $14$-digit accuracy and compare them with the values obtained with our method using $T_{-1}$, $T_0$, and $T_1$ regularization. We use $N=n^2$ quadrature points for the 2D integrals and $10n$ points for the 1D integrals, with $2\leq n\leq200$. The 2D integrals are computed with Gauss quadrature on triangles \cite{lether1976}, while the 1D integrals are computed with 1D Gauss quadrature ($\bs{\hat{x}}_0$ is far from the edges). We plot the relative errors in \cref{fig:exp1}. The convergence is linear with the number of quadrature points when using the method of \cref{sec:first-order}, and improves to quadratic when using that of \cref{sec:high-order}. (Note that the errors come from the 2D integrals---the 1D integrals are analytic so their errors are much smaller in comparison.)

\begin{figure}
\centering
\includegraphics[scale=0.45]{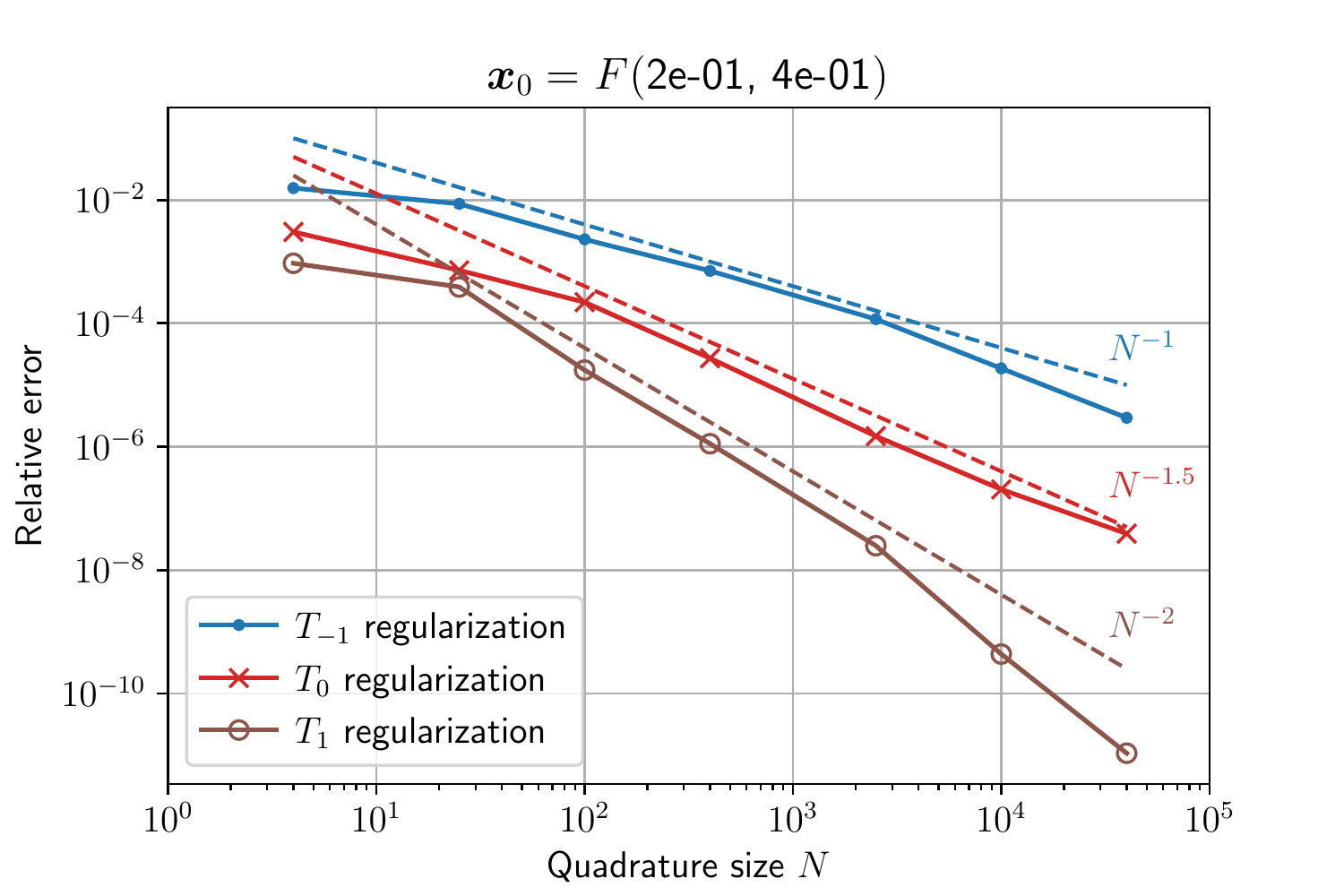}
\includegraphics[scale=0.45]{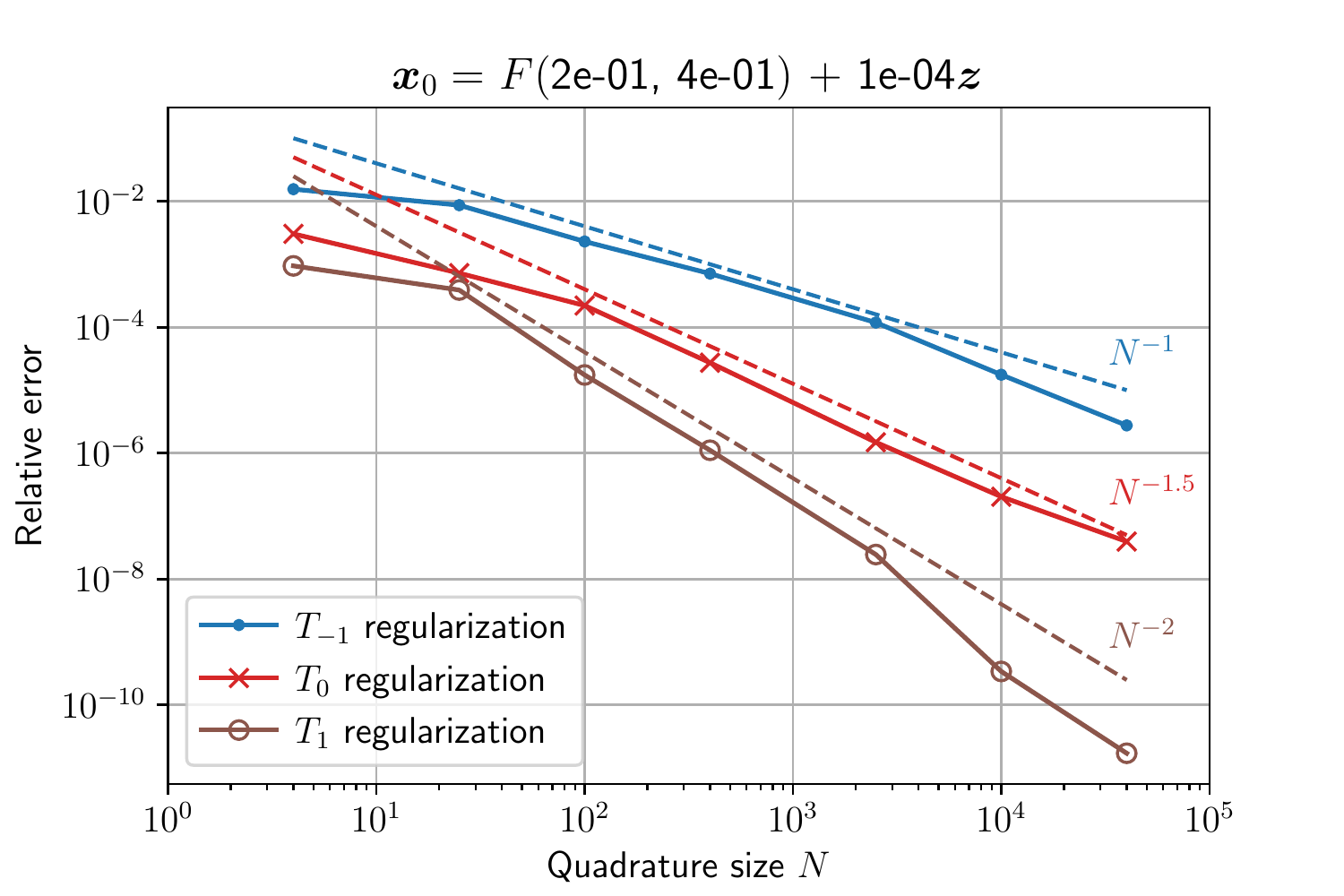}
\caption{\textit{Results for the singular (left) and near-singular (right) cases are similar. The $T_{-1}$ regularization, which is the method of \cref{sec:first-order}, converges linearly with the number of quadrature points. As we subtract more Taylor terms, the 2D integrand becomes smoother and convergence accelerates.}}
\label{fig:exp1}
\end{figure}

We now look at the case where $\bs{x}_0$ is close to an edge by taking $\bs{x}_0=F(0.5,\epsilon)$ for small values of $\epsilon$, which corresponds to points near the lower edge. In this case, it is crucial to compute the 1D integrals with the transplanted quadrature of \cref{sec:step5}. We take, again, $N=n^2$ points for the 2D integrals and $10n$ in 1D with $2\leq n\leq200$, and choose $\mu=0$ and $\nu=2\epsilon$ for the transplanted quadrature. We show the results in \cref{fig:exp2} for $\epsilon=10^{-4}$ and observe similar convergence curves.

\begin{figure}
\centering
\includegraphics[scale=0.45]{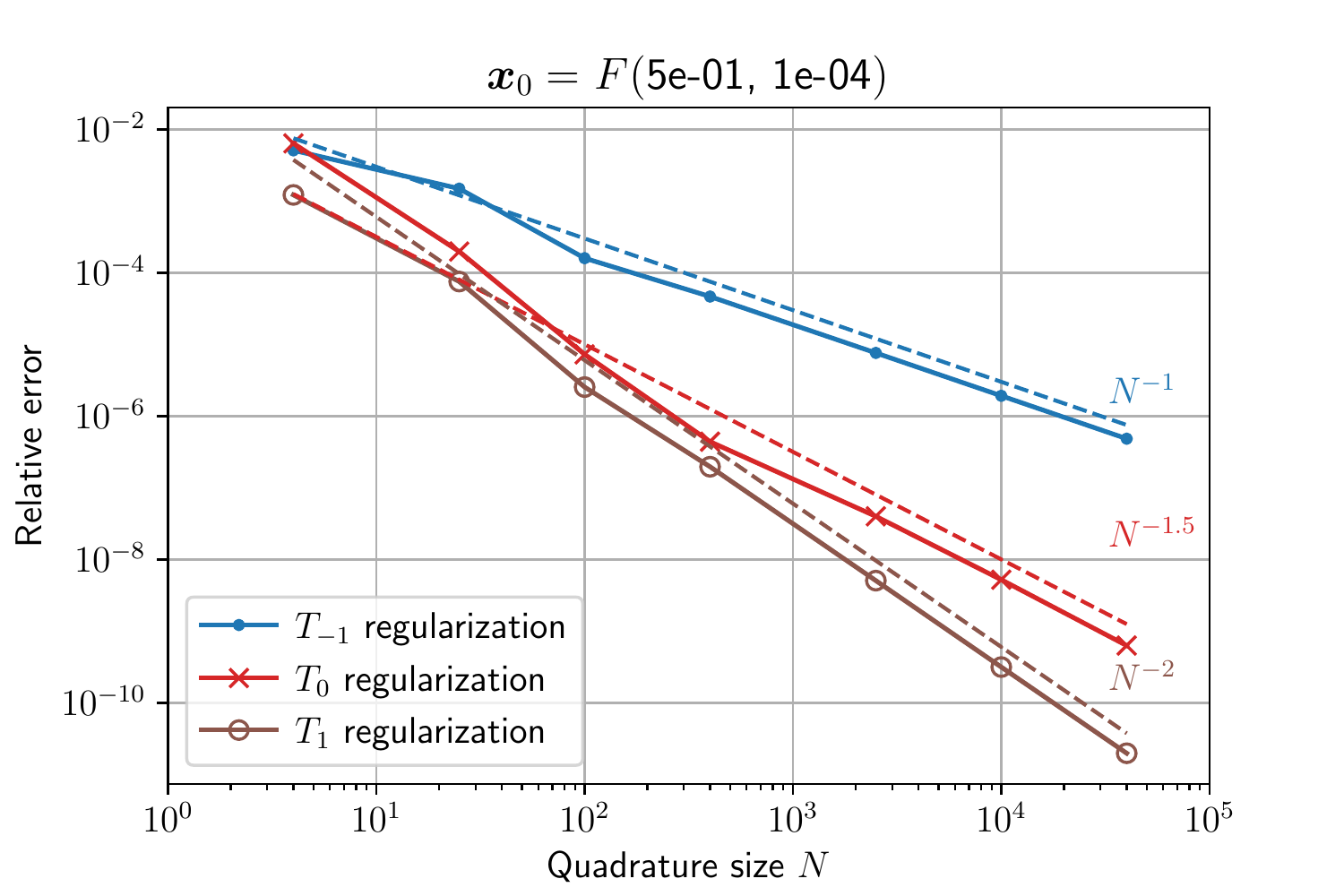}
\includegraphics[scale=0.45]{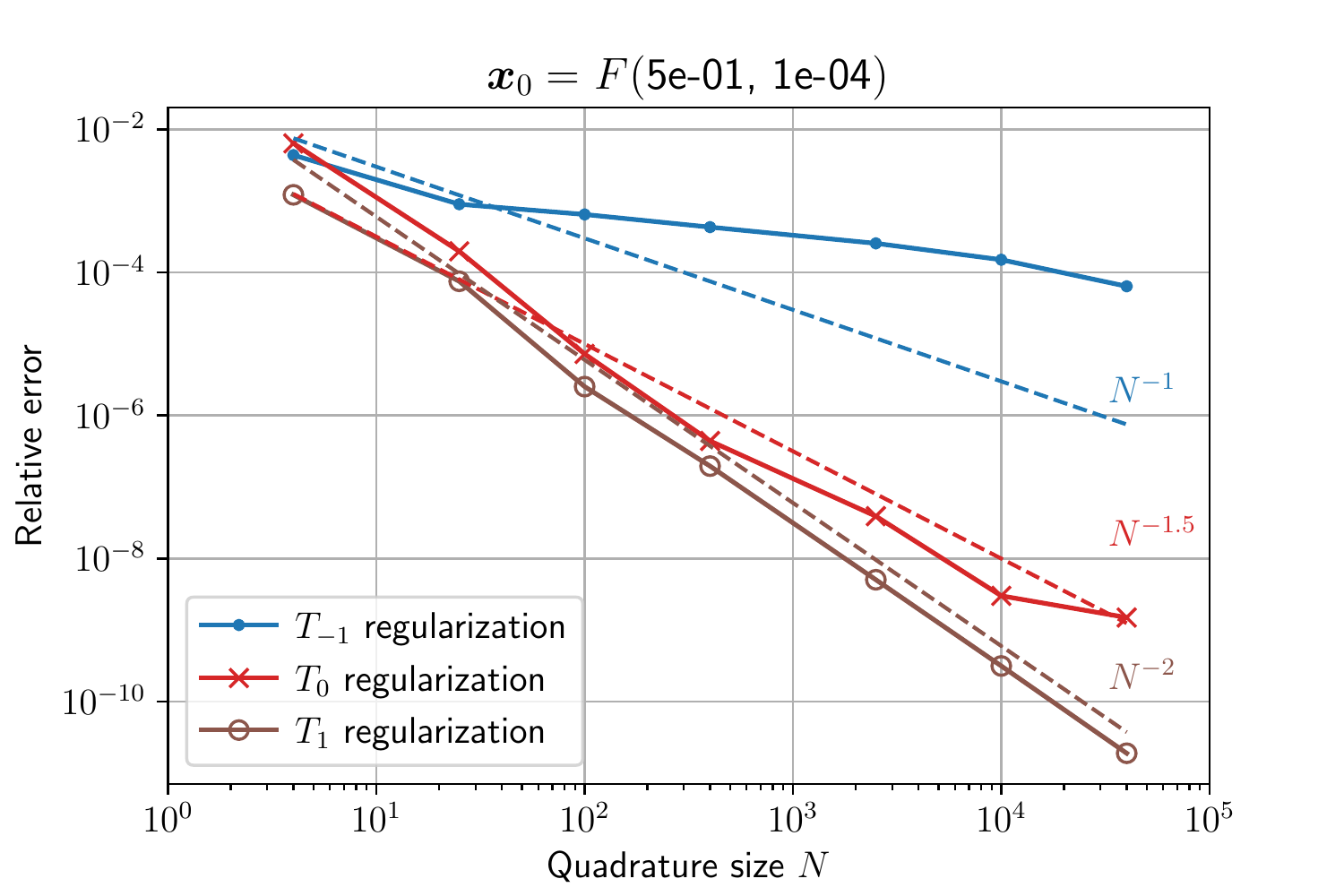}
\caption{\textit{We recover similar convergence curves when the singularity is near the lower edge (left). For this experiment, it was necessary to compute the 1D near-singular integrals with the transplanted Gauss quadrature of \cref{sec:step5}. If we employ 1D Gauss quadrature (right), the relative errors plateau at $\OO(\epsilon\log(2/\epsilon))$, $\OO(\epsilon^2\log(2/\epsilon))$ and $\OO(\epsilon^3\log(2/\epsilon))$ for $T_{-1}$, $T_0$ and $T_1$ regularization.}}
\label{fig:exp2}
\end{figure}

A code to compute these integrals in Python can be found in \cref{sec:code}. It uses $\texttt{scipy}$'s BFGS for simplicity. A code for general quadratic triangles is available on the first author's \href{https://github.com/Hadrien-Montanelli/singintpy}{GitHub} page.

\paragraph{Singular 4D integrals over two identical quadratic triangles} We explain now how we can use our method for computing integrals of the form of \cref{eq:intsing} to compute integrals over two curved triangles, which occur when solving \cref{eq:SL} with boundary elements. Suppose we are interested in computing
\begin{align}
I = \int_\mathcal{T}\int_\mathcal{T}\frac{dS(\bs{y})dS(\bs{x})}{\vert\bs{x}-\bs{y}\vert},
\label{eq:I}
\end{align}
where $\mathcal{T}$ is the triangle of \cref{fig:exp_tri} with $a=b=0.5$ and $c=1$. We map the $\bs{y}$-integral back to $\widehat{T}$,
\begin{align}
I = \int_{\mathcal{T}}\int_{\widehat{T}}\frac{\psi(\bs{\hat{y}})}{\vert\bs{x}-F(\hat{\bs{y}})\vert}dS(\hat{\bs{y}})dS(\bs{x}),
\end{align}
with $\psi(\bs{\hat{y}})=\vert J_1(\bs{\hat{y}})\times J_2(\bs{\hat{y}})\vert$. Then, we discretize it with $N$-point Gauss quadrature on triangles,
\begin{align}
I \approx I_N = \sum_{n=1}^Nw_n\psi(\hat{\bs{y}}_n)\int_{\mathcal{T}}\frac{dS(\bs{x})}{\vert\bs{x}-F(\hat{\bs{y}}_n)\vert}.
\end{align}
There remain $N$ integrals of the form of \cref{eq:intsing}, which we compute with the $N$-point quadrature methods with $T_{-1}$, $T_0$, and $T_1$ regularization that converge at the rates $\OO(N^{-1})$, $\OO(N^{-1.5})$, and $\OO(N^{-2})$. It is possible to show that the integrand of the $\bs{\hat{y}}$-integral is as regular as $\hat{y}_2\,\mrm{arcsinh}((1-\hat{y}_1)/\hat{y}_2)$, for which standard $N$-point Gauss quadrature on triangles converges at the rate $\OO(N^{-2})$. Therefore, we obtain $M$-point quadrature rules for 4D singular integrals that converge at the rates $\OO(M^{-0.5})$, $\OO(M^{-0.75})$, and $\OO(M^{-1})$. We compute the integral \cref{eq:I} with the singularity cancellation method of Sauter and Schwab \cite[Sec.~5.2.1]{sauter2011} to 8-digit accuracy and plot the errors in \cref{fig:exp3}.

\begin{figure}
\centering
\includegraphics[scale=0.5]{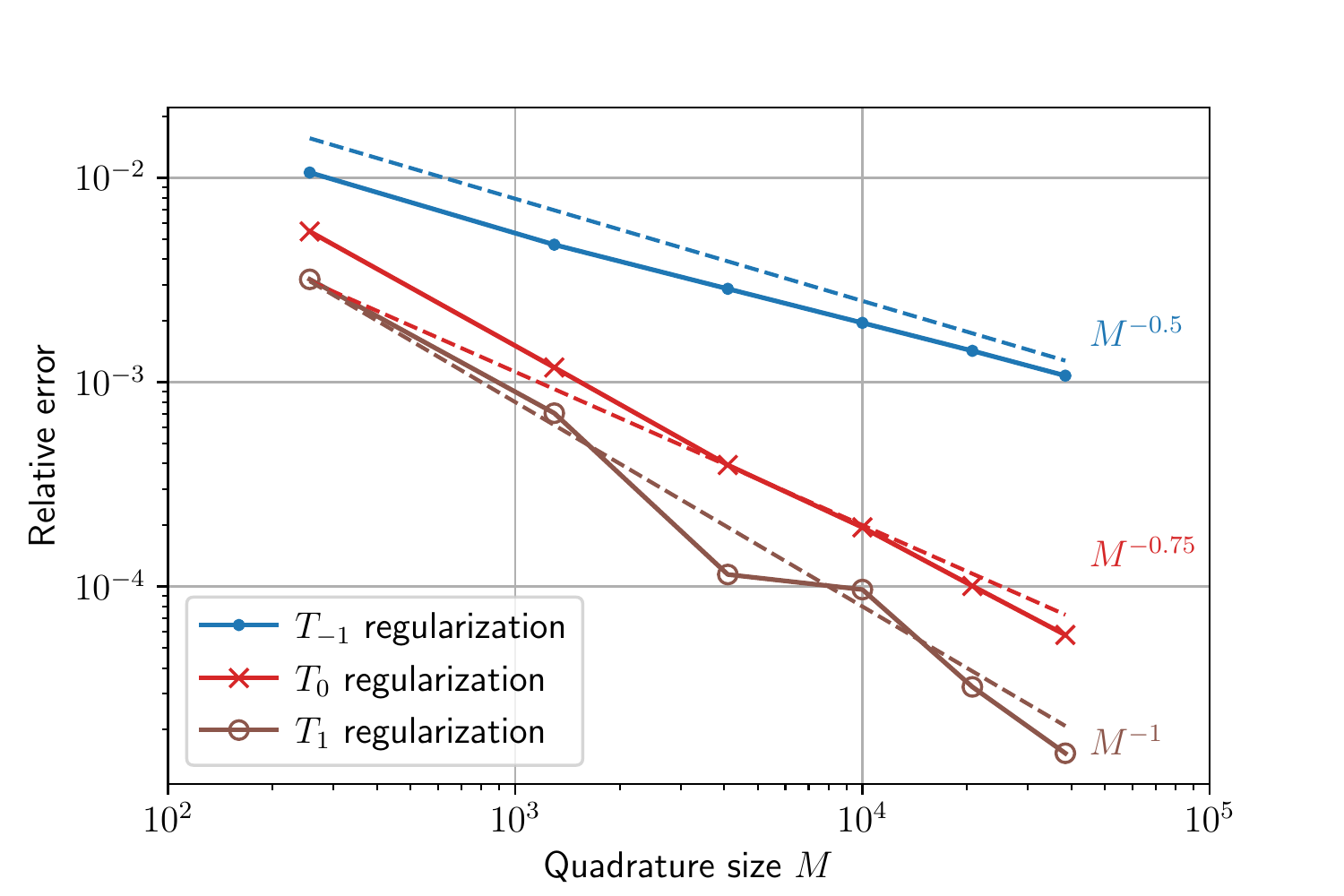}
\caption{\textit{Combining the method described in this paper with Gauss quadrature on triangles yields a new method for computing 4D singular integrals of the form of \cref{eq:I}. The method converges linearly with the total 4D number of points $M$ when using the $T_1$ regularization described in \cref{sec:high-order}.}}
\label{fig:exp3}
\end{figure}

\paragraph{Scattering by two half-spheres} Let $\Omega\subset\R^3$ be a bounded domain whose complement is connected, $\Gamma$ its boundary, and $k>0$ the wavenumber. Given an incident wave $u^i(\bs{x})$, a solution to $\Delta u + k^2u = 0$ in $\R^3$, we look for the scattered field $u^s(\bs{x})$, a solution to $\Delta u + k^2u = 0$ in $\R^3\setminus\overline{\Omega}$, satisfying the \textit{Sommerfeld radiation condition}\footnote{The Sommerfeld radiation condition, which guarantees that the scattered wave is outgoing, reads 
\begin{align}
\lim_{r\to\infty} r\left(\frac{\partial u}{\partial r} - iku\right)=0, \quad \text{uniformly with respect to}\;\,r=\vert\bs{x}\vert.
\end{align}} and such that $u^i(\bs{x}) + u^s(\bs{x}) = 0$ on $\Gamma=\partial\Omega$. As said in the introduction, assuming that $k^2$ is not an eigenvalue of $-\Delta$ in $\Omega$, this leads to \cite[Thm.~3.28]{colton1983},
\begin{align}
\frac{1}{4\pi}\int_{\Gamma}\frac{e^{ik\vert\bs{x}-\bs{y}\vert}}{\vert\bs{x}-\bs{y}\vert}\varphi^s(\bs{y})d\Gamma(\bs{y}) = -u^i(\bs{x}), \quad \bs{x}\in\Gamma.
\label{eq:SL2}
\end{align}
Once the equation \cref{eq:SL2} is solved for $\varphi^s$, the scattered field $u^s$ is given by 
\begin{align}
u^s(\bs{x}) = \frac{1}{4\pi}\int_{\Gamma}\frac{e^{ik\vert\bs{x}-\bs{y}\vert}}{\vert\bs{x}-\bs{y}\vert}\varphi^s(\bs{y})d\Gamma(\bs{y}), \quad \bs{x}\in\R^3\setminus\Omega.
\end{align}

Of particular interest is the \textit{far-field pattern} $u_\infty$ defined on the unit sphere $\Stwo$ via \cite[Thm.~2.6]{colton2013}
\begin{align}
u^s(\bs{x}) = \frac{e^{ikr}}{r}u_\infty\left(\frac{\bs{x}}{r}\right) + \OO\left(\frac{1}{r^2}\right), \quad r=\vert\bs{x}\vert\to\infty,
\label{eq:u_s}
\end{align}
with integral representation \cite[Thm.~3.14]{colton2013}
\begin{align}
u_\infty(\bar{\bs{x}}) = \frac{1}{4\pi}\int_{\Gamma}e^{-ik\bar{\bs{x}}\cdot\bs{y}}\varphi^s(\bs{y})d\Gamma(\bs{y}), \quad \bar{\bs{x}}\in\Stwo.
\label{eq:u_inf}
\end{align}

We discretize \cref{eq:SL2} using a boundary element method with quadratic basis functions ($p=2$) and quadratic triangles ($q=2$). This yields the computation of integrals of the form \cite[Chap.~5]{sauter2011}
\begin{align}
I = \frac{1}{4\pi}\int_{\mathcal{T}_\ell}\int_{\mathcal{T}_{\ell'}}\frac{e^{ik\vert\bs{x}-\bs{y}\vert}}{\vert\bs{x}-\bs{y}\vert}\varphi_{j'}(F_{\ell'}^{-1}(\bs{y}))dS(\bs{y})\varphi_j(F_\ell^{-1}(\bs{x}))dS(\bs{x}),
\label{eq:I_2}
\end{align}
where the $\varphi_j$'s are the basis functions defined in \cref{eq:basis_func}, and $\mathcal{T}_\ell$ and $\mathcal{T}_{\ell'}$ are two quadratic triangles. (The integral $I$ in \cref{eq:I_2} is singular/near-singular when $\mathcal{T}_\ell$ and $\mathcal{T}_{\ell'}$ are identical/close---when the triangles are far apart, 4D Gauss quadrature may be used with exponential convergence.) 

As in the previous numerical experiment, we first map the inner integral back to $\widehat{T}$ and then discretize it with $N$-point Gauss quadrature on triangles,
\begin{align}
I \approx I_N = \frac{1}{4\pi}\sum_{n=1}^Nw_n\psi_{j'}(\hat{\bs{y}}_n)\int_{\mathcal{T}_\ell}\frac{e^{ik\vert\bs{x}-F_{\ell'}(\hat{\bs{y}}_n)\vert}}{\vert\bs{x}-F_{\ell'}(\hat{\bs{y}}_n)\vert}\varphi_j(F_\ell^{-1}(\bs{x}))dS(\bs{x}),
\end{align}
where $\psi_{j'}$ includes the Jacobian. We are left with the computations of $N$ integrals of the form
\begin{align}
\int_{\mathcal{T}_\ell}\frac{e^{ik\vert\bs{x}-\bs{x}_n\vert}-1}{\vert\bs{x}-\bs{x}_n\vert}\varphi_j(F_\ell^{-1}(\bs{x}))dS(\bs{x}) + \int_{\mathcal{T}_\ell}\frac{\varphi_j(F_\ell^{-1}(\bs{x}))}{\vert\bs{x}-\bs{x}_n\vert}dS(\bs{x}).
\end{align}
The first integral, whose integrand has bounded first derivatives, is also discretized with $N$-point Gauss quadrature (convergence rate $\OO(N^{-1.5})$), while the second one is discretized with the method described in this paper with $T_{-1}$ regularization (convergence rate $\OO(N^{-1})$); we take $N=16^2$ points.

We take $k\in\{2\pi,4\pi,8\pi,16\pi\}$ and consider the scattering of a plane wave $u^i(r,\theta)=e^{ikr\cos\theta}$ by two half-spheres of radius $1$ centered at $(0,0,\pm\delta)$; see \cref{fig:exp4a}. We first compute the solution for~$\delta=0$, which corresponds to the scattering of the incident wave by the unit sphere---we solve \cref{eq:SL2} and evaluate \cref{eq:u_inf} for an increasing number of quadratic elements. We plot the relative $\infty$-norm error in the far-field pattern in \cref{fig:exp4b} (left); the exact far-field pattern is \cite[Eqn.~3.32]{colton2013}
\begin{align}
u_\infty(\theta) = \frac{i}{k}\sum_{n=0}^\infty(2n+1)\frac{j_n(k)}{h_n^{(1)}(k)}P_n(\cos\theta), \quad \theta\in[0,2\pi], 
\label{eq:u_inf_ex}
\end{align}
with Legendre polynomials $P_n$, and spherical Bessel and Hankel functions $j_n$ and $h_n^{(1)}$. We observe quartic \textit{superconvergence} as the mesh size $h\to0$; cubic convergence was expected.\footnote{For boundary elements of degree $(p,q)$ with a mesh of size $h$, the error in the numerical far-field is bounded by
\begin{align}
\vert u_\infty(\theta) - u_{\infty,h}(\theta)\vert \leq c\left\{h^{2(p+1)+1}\Vert\varphi^s\Vert_{H^{p+1}(\Gamma)} + h^{q+1}\Vert\varphi^s\Vert_{L^2(\Gamma)}\right\}.
\end{align}
Results of this form go back to \cite{nedelec1976}; see also \cite[Chap.~8]{sauter2011}. For the sphere, $\OO(h^{q+1})$ seems to improve to $\OO(h^{2q})$.}

\begin{figure}
\begin{tabular}{cc}
\includegraphics[scale=0.225]{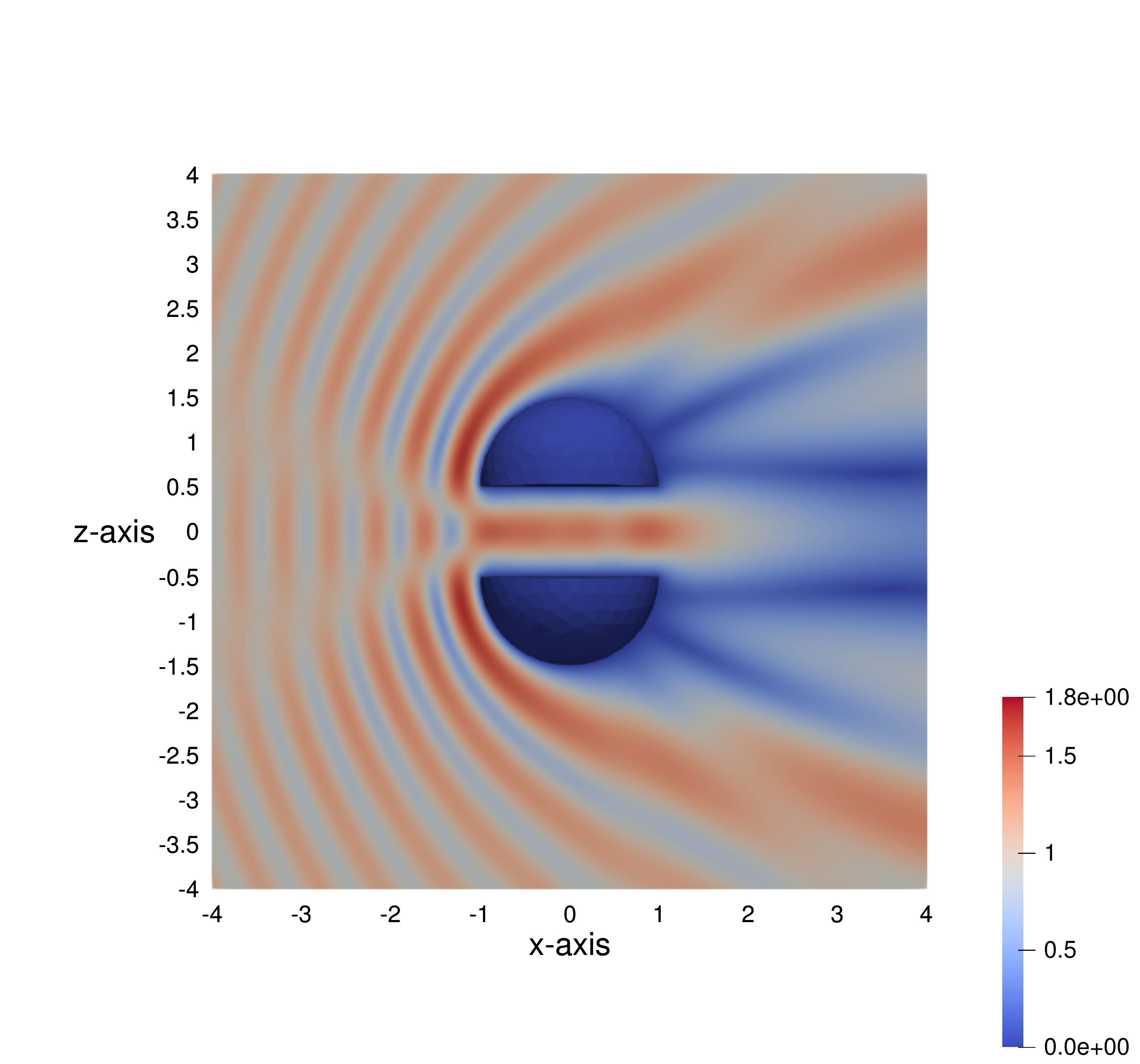} &
\includegraphics[scale=0.225]{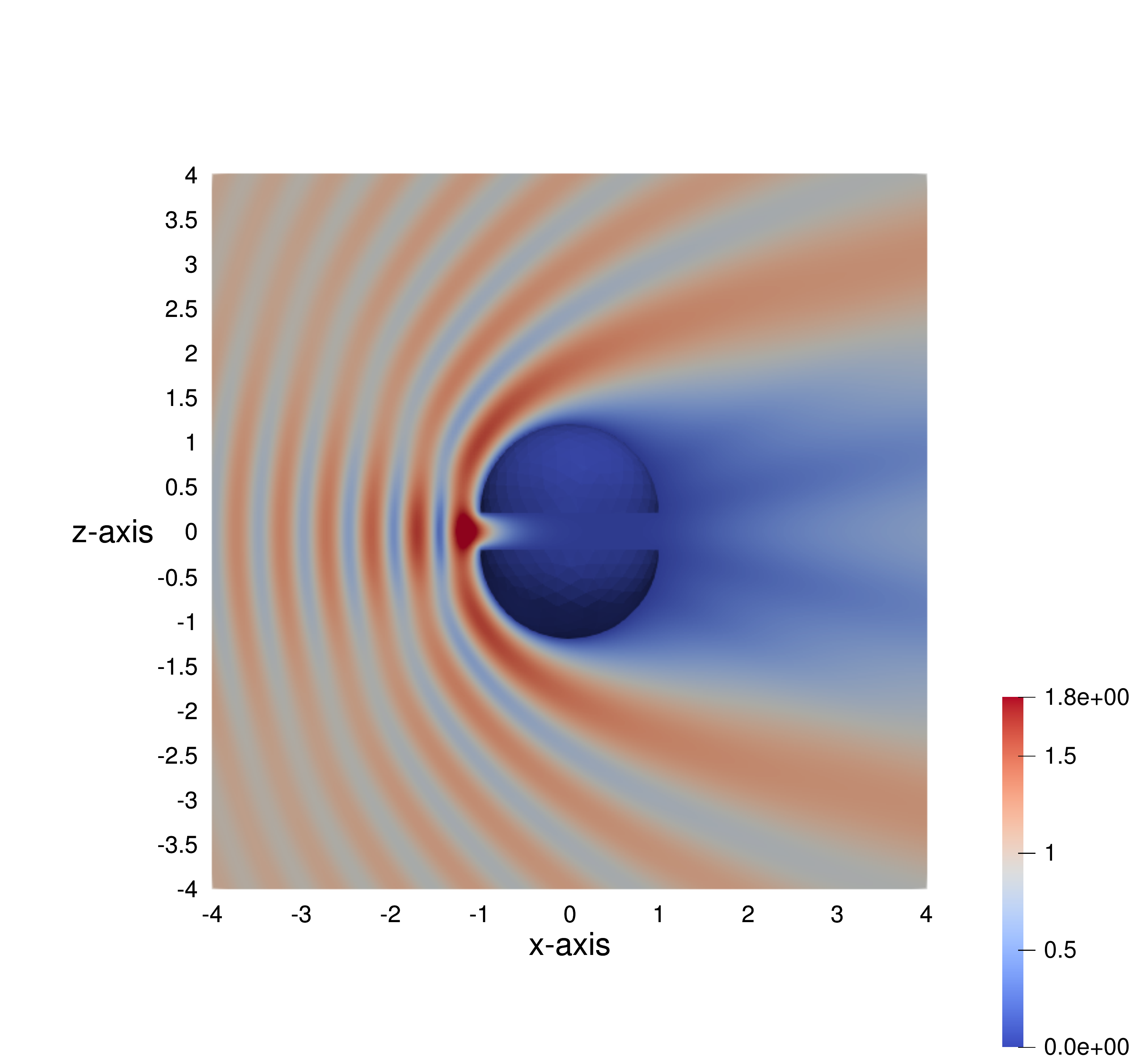} \\
\includegraphics[scale=0.225]{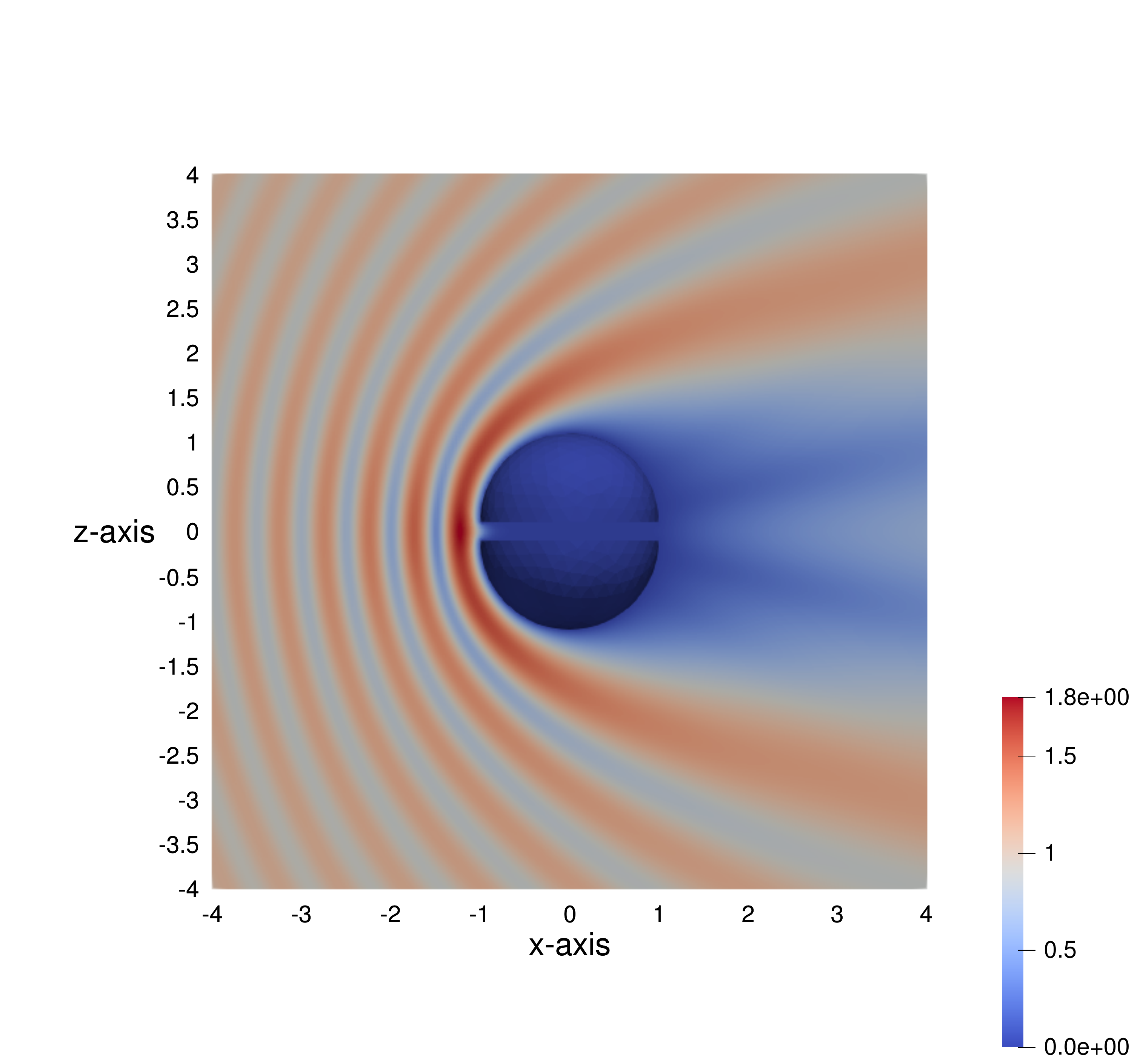} &
\includegraphics[scale=0.225]{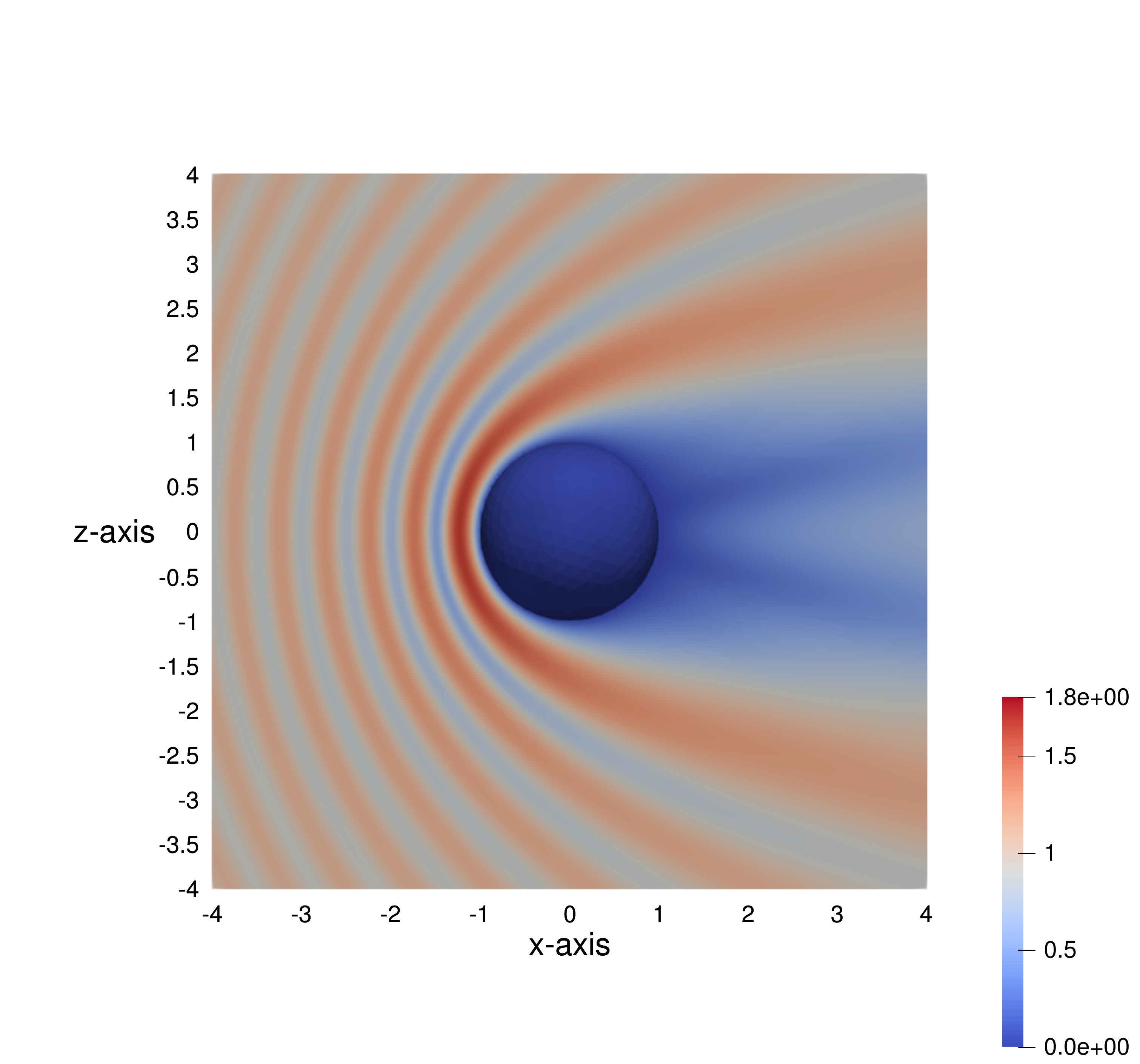} 
\end{tabular}
\caption{\textit{The incident plane wave $u^i(r,\theta)=e^{ikr\cos\theta}$ travels along the $x$-axis towards $x>0$ and is scattered by two half-spheres of radius $1$ centered at $(0,0,\pm\delta)$. We plot the amplitude of $u^i+u^s$ for $k=2\pi$ and $\delta=0.5$ (top left), $\delta=0.2$ (top right), $\delta=0.1$ (bottom left), and $\delta=0$ (bottom right). For $\delta=0.5$, the distance between the two half-spheres, $2\delta=1$, equals the wavelength $\lambda=2\pi/k=1$; the incident wave ``sees the gap'' and is able to go through it. This not the case when $\delta<\lambda$.}}
\label{fig:exp4a}
\end{figure}

\begin{figure}
\centering
\includegraphics[scale=0.45]{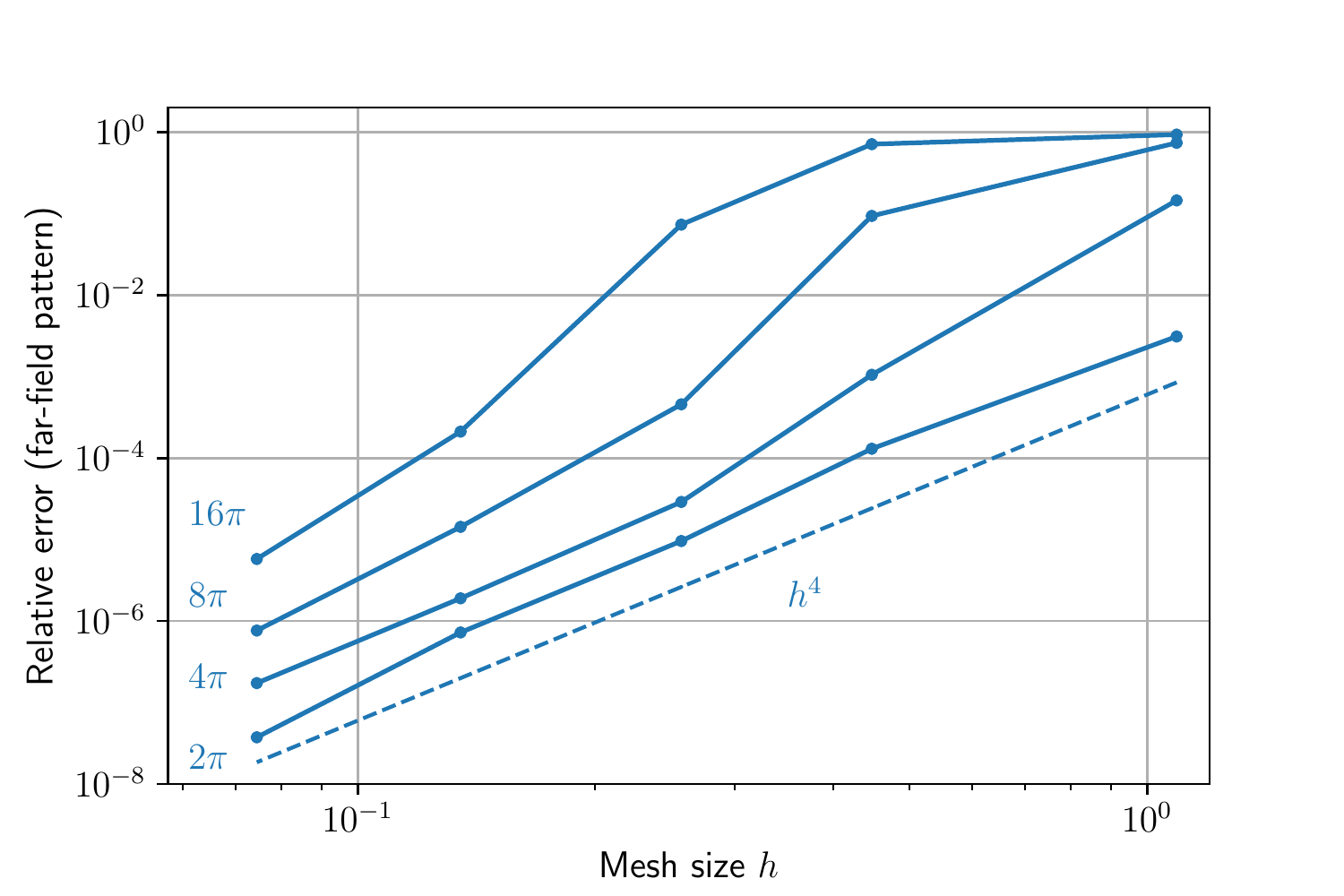}
\includegraphics[scale=0.45]{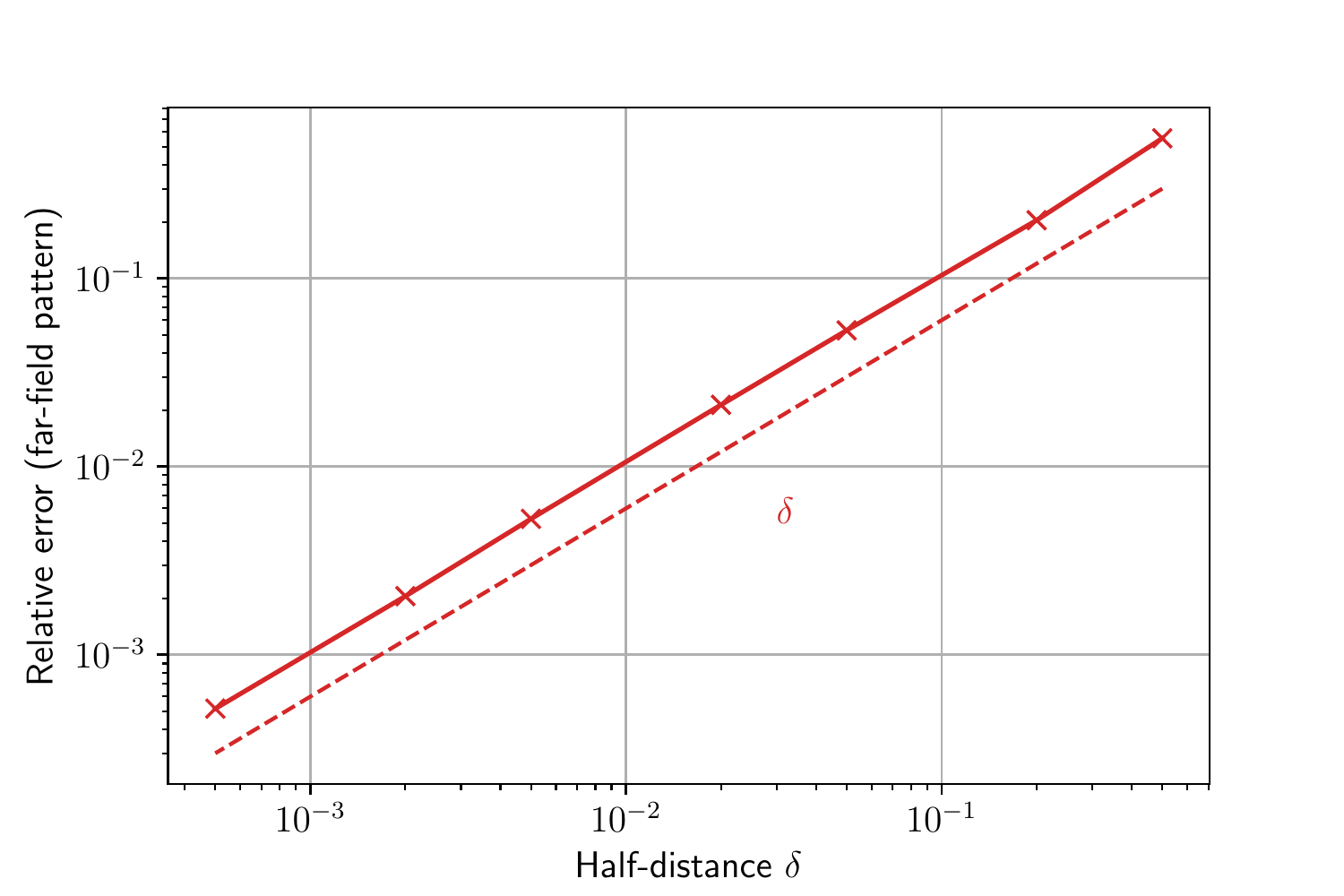}
\caption{\textit{For $\delta=0$, the incident wave is scattered by the unit sphere; the numerical far-field converges quartically to the exact far-field \cref{eq:u_inf_ex} as the mesh size $h\to0$ (left). For $\delta>0$, it is scattered by two half-spheres centered at $(0,0,\pm\delta)$ and separated by a distance $2\delta$; the numerical far-field for $k=2\pi$ is computed for each $\delta$ with $h\approx10^{-1}$. It converges linearly to \cref{eq:u_inf_ex} as $\delta\to0$ (right).}}
\label{fig:exp4b}
\end{figure}

We now compute the solution for $k=2\pi$ and small values of $\delta>0$---for each of these values, we solve \cref{eq:SL2} and evaluate \cref{eq:u_inf} for a mesh size $h\approx10^{-1}$, which gives about five digits of~accuracy. We observe in \cref{fig:exp4b} (right) linear convergence of these far-field patterns to \cref{eq:u_inf_ex} as $\delta\to0$, in agreement with \cite{delourme2013, delourme2012}. This experiment is particularly challenging when $\delta\ll h$. The method of Sauter and Schwab \cite[Sec.~5]{sauter2011}, designed for elements that touch each other, cannot be applied to elements facing each other across the gap---these would simply be computed with Gauss quadrature, yielding very inaccurate results. We obtained accurate results for values of $\delta$ as small as~$5\times10^{-4}$.

We wrap up this section with a few words about implementation. We added our novel method for computing singular and near-singular integrals---as well as new features to handle high-order boundary elements and basis functions---to the C++ \href{http://leprojetcastor.gitlab.labos.polytechnique.fr/castor/}{\texttt{castor}} library of \'{E}cole Polytechnique, whose lead developer is the second author. The \texttt{castor} library provides tools to create and manipulate matrices \textit{\`a la} MATLAB, and uses an optimized BLAS library for fast linear algebra computations. The finest meshes ($h\approx7.45\times10^{-2}$) yield dense matrices of size $\approx32,000\times32,000$---we employed hierarchical matrices for compression, and, to solve the resulting linear systems, GMRES \cite{saad1986} preconditioned with a hierarchical $LU$ factorization at a lower precision. Finally, we utilized Gmsh~\cite{geuzaine2009} to generate quadratic triangular elements. The computations were carried out on an Intel Xeon Gold 6154 processor (3.00 GHz, 36 cores) with 512 GB of RAM.
 
\section{Discussion}

We presented in this work algorithms for computing singular and near-singular integrals on curved triangular elements of the form of \cref{eq:intsing}. These are particularly relevant to the single-layer potential formulation of the 3D Helmholtz Dirichlet problem in the presence of close obstacles. These are also useful for evaluating the boundary element solution close to the surface over which the single-layer potential is defined. Our methodology is based on singularity projection, singularity subtraction, the continuation approach, and transplanted Gauss quadrature.

We provided several numerical examples for quadratic basis functions and triangles in \cref{sec:numerics} but our method works for functions and triangles of any degree $p\geq0$ and $q\geq1$. Moreover, the extension to quadrilateral elements is straightforward. Finally, we focused on functions $F$ and $\varphi$ that arise with boundary elements, but our techniques apply to any smooth functions $F$ and $\varphi$. 

There are many ways in which this work could be profitably continued. For instance, one could consider singularity subtraction with Taylor-like asymptotic expansions of order $\ell\geq4$, which would lead to $N$-point quadrature rules that converge at the rates $\OO(N^{-(\ell+1)/2})$. One could also extend our procedure to strongly singular kernels, which will be the subject of a forthcoming publication. This would allow us to solve the Dirichlet problem with the \textit{double-layer potential} via \cite[Thm.~3.15]{colton1983}
\begin{align}
\int_{\Gamma}\frac{\partial G(\bs{x},\bs{y})}{\partial n(\bs{y})}\varphi(\bs{y})d\Gamma(\bs{y}) + \frac{\varphi(\bs{x})}{2}= u_D(\bs{x}), \quad \bs{x}\in\Gamma.
\end{align}

\appendix

\section{Taylor coefficients of $R^2$}\label{sec:taylor_R2} 
The Taylor expansion of $R^2=\vert F(\bs{\hat{x}}) - \bs{x}_0\vert^2$ is
\begin{align}
\vert F(\bs{\hat{x}}) - \bs{x}_0\vert^2 & = \underbrace{\vert J_0\delta\bs{\hat{x}}\vert^2 + h^2}_{\OO(\delta\hat{x}^2)} + \underbrace{h\sum_{i=1}^3a_i\delta\hat{x}_1^{3-i}\delta\hat{x}_2^{i-1} + \sum_{i=1}^4c_i\delta \hat{x}_1^{4-i}\delta\hat{x}_2^{i-1}}_{\OO(\delta\hat{x}^3)} \nonumber \\
& + \underbrace{h\sum_{i=1}^4b_i\delta\hat{x}_1^{4-i}\delta\hat{x}_2^{i-1} + \sum_{i=1}^5d_i\delta\hat{x}_1^{5-i}\delta\hat{x}_2^{i-1}}_{\OO(\delta\hat{x}^4)} + \OO(\delta\hat{x}^5).
\end{align}
The coefficients are:
\begin{align}
& a_1 = \bs{e}_h\cdot F_{\hat{x}_1\hat{x}_1}, \nonumber\eqvspp
& a_2 = 2\bs{e}_h\cdot F_{\hat{x}_1\hat{x}_2}, \nonumber\eqvspp
& a_3 = \bs{e}_h\cdot F_{\hat{x}_2\hat{x}_2}, \nonumber\eqvsp
& b_1 = \frac{1}{3}\bs{e}_h\cdot F_{\hat{x}_1\hat{x}_1\hat{x}_1}, \nonumber\eqvsp
& b_2 = \bs{e}_h\cdot F_{\hat{x}_1\hat{x}_1\hat{x}_2}, \nonumber\eqvspp
& b_3 = \bs{e}_h\cdot F_{\hat{x}_1\hat{x}_2\hat{x}_2}, \nonumber\eqvsp
& b_4 = \frac{1}{3}\bs{e}_h\cdot F_{\hat{x}_2\hat{x}_2\hat{x}_2}, \nonumber\eqvsp
& c_1 = F_{\hat{x}_1}\cdot F_{\hat{x}_1\hat{x}_1}, \eqvspp
& c_2 = 2F_{\hat{x}_1}\cdot F_{\hat{x}_1\hat{x}_2} + F_{\hat{x}_2}\cdot F_{\hat{x}_1\hat{x}_1}, \nonumber\eqvspp
& c_3 = 2F_{\hat{x}_2}\cdot F_{\hat{x}_1\hat{x}_2} + F_{\hat{x}_1}\cdot F_{\hat{x}_2\hat{x}_2}, \nonumber\eqvspp
& c_4 = F_{\hat{x}_2}\cdot F_{\hat{x}_2\hat{x}_2}, \nonumber\eqvsp
& d_1 = \frac{1}{3}F_{\hat{x}_1}\cdot F_{\hat{x}_1\hat{x}_1\hat{x}_1} + \frac{1}{4}F_{\hat{x}_1\hat{x}_1}\cdot F_{\hat{x}_1\hat{x}_1}, \nonumber\eqvsp
& d_2 = \frac{1}{3}F_{\hat{x}_2}\cdot F_{\hat{x}_1\hat{x}_1\hat{x}_1} + F_{\hat{x}_1}\cdot F_{\hat{x}_1\hat{x}_1\hat{x}_2} + F_{\hat{x}_1\hat{x}_1}\cdot F_{\hat{x}_1\hat{x}_2}, \nonumber\eqvsp
& d_3 = \frac{1}{2}F_{\hat{x}_1\hat{x}_1}\cdot F_{\hat{x}_2\hat{x}_2} + F_{\hat{x}_1}\cdot F_{\hat{x}_1\hat{x}_2\hat{x}_2} + F_{\hat{x}_2}\cdot F_{\hat{x}_1\hat{x}_1\hat{x}_2} 
+ F_{\hat{x}_1\hat{x}_2}\cdot F_{\hat{x}_1\hat{x}_2}, \nonumber\eqvsp
& d_4 = \frac{1}{3}F_{\hat{x}_1}\cdot F_{\hat{x}_2\hat{x}_2\hat{x}_2} + F_{\hat{x}_2}\cdot F_{\hat{x}_1\hat{x}_2\hat{x}_2} + F_{\hat{x}_2\hat{x}_2}\cdot F_{\hat{x}_1\hat{x}_2}, \nonumber\eqvsp
& d_5 = \frac{1}{3}F_{\hat{x}_2}\cdot F_{\hat{x}_2\hat{x}_2\hat{x}_2} + \frac{1}{4}F_{\hat{x}_2\hat{x}_2}\cdot F_{\hat{x}_2\hat{x}_2}, \nonumber
\end{align}
where $\bs{e}_h$ is the unit vector such that $F(\bs{\hat{x}}_0) - \bs{x}_0 = h\bs{e}_h$. Note that the $c_i$'s and the $d_i$'s are those derived in \cite{aliabadi1985}---our contributions are the $a_i$'s and the $b_i$'s. Partial derivatives are evaluated at $\bs{\hat{x}}_0$.

\newpage 
\section{Taylor coefficients of $\psi R^{-1}$} \label{sec:taylor_psiRn1}
The expansion of $\psi R^{-1}=v\vert F(\bs{\hat{x}}) - \bs{x}_0\vert^{-1}$ is
\begin{align}
\psi(\bs{\hat{x}})\vert F(\bs{\hat{x}}) - \bs{x}_0\vert^{-1} = T_{-1}(\bs{\hat{x}};h) + T_0(\bs{\hat{x}};h) + T_1(\bs{\hat{x}};h) + \OO(\delta\hat{x}^2).
\end{align}
The coefficients are (the $g_i$'s and the $h_i$'s are from \cite{aliabadi1985}, the $e_i$'s and $f_i$'s appear to be new):
\begin{align}
& e_1 = -\frac{1}{2}\left[a_1\psi_{\hat{x}_1} + b_1\psi_0\right], \nonumber\\
& e_2 = -\frac{1}{2}\left[a_2\psi_{\hat{x}_1} + a_1\psi_{\hat{x}_2} +  b_2\psi_0\right], \nonumber\\
& e_3 = -\frac{1}{2}\left[a_3\psi_{\hat{x}_1} + a_2\psi_{\hat{x}_2} +  b_3\psi_0\right], \nonumber\\
& e_4 = -\frac{1}{2}\left[a_3\psi_{\hat{x}_2} + b_4\psi_0\right], \nonumber\\
& f_1 = \frac{3}{8}\psi_0\left[a_1^2\right], \nonumber\\
& f_2 = \frac{3}{8}\psi_0\left[2a_1a_2\right], \nonumber\\
& f_3 = \frac{3}{8}\psi_0\left[a_2^2 + 2a_1a_3\right], \nonumber\\
& f_4 = \frac{3}{8}\psi_0\left[2a_2a_3\right], \nonumber\\
& f_5 = \frac{3}{8}\psi_0\left[a_3^2\right], \nonumber\\
& g_1 = -\frac{1}{2}\left[c_1\psi_{\hat{x}_1} + d_1\psi_0\right], \nonumber\\
& g_2 = -\frac{1}{2}\left[c_2\psi_{\hat{x}_1} + c_1\psi_{\hat{x}_2} +  d_2\psi_0\right], \\
& g_3 = -\frac{1}{2}\left[c_3\psi_{\hat{x}_1} + c_2\psi_{\hat{x}_2} +  d_3\psi_0\right], \nonumber\\
& g_4 = -\frac{1}{2}\left[c_4\psi_{\hat{x}_1} + c_3\psi_{\hat{x}_2} +  d_4\psi_0\right], \nonumber\\
& g_5 = -\frac{1}{2}\left[c_4\psi_{\hat{x}_2} + d_5\psi_0\right], \nonumber\\
& h_1 = \frac{3}{8}\psi_0\left[c_1^2\right], \nonumber\\
& h_2 = \frac{3}{8}\psi_0\left[2c_1c_2\right], \nonumber\\
& h_3 = \frac{3}{8}\psi_0\left[c_2^2 + 2c_1c_3\right], \nonumber\\
& h_4 = \frac{3}{8}\psi_0\left[2c_1c_4 + 2c_2c_3\right], \nonumber\\
& h_5 = \frac{3}{8}\psi_0\left[c_3^2 + 2c_2c_4\right], \nonumber\\
& h_6 = \frac{3}{8}\psi_0\left[2c_3c_4\right], \nonumber\\
& h_7 = \frac{3}{8}\psi_0\left[c_4^2\right]. \nonumber
\end{align}

\section{Computation of $I_0$ and $I_1$}\label{sec:I0_I1}
Using the continuation approach, we obtain
\begin{align}
I_0(h) & = \sum_{j=1}^3\hat{s}_j\int_{\partial\widehat{T}_j-\bs{\hat{x}}_0}\psi'_0(\bs{\hat{x}})\frac{\dsp\vert J_0\bs{\hat{x}}\vert\sqrt{\vert J(\bs{\hat{x}}_0)\bs{\hat{x}}\vert^2 + h^2}-h^2\mrm{asinh}\left(\frac{\vert J_0\bs{\hat{x}}\vert}{h}\right)}{2\vert J_0\bs{\hat{x}}\vert^3}ds(\bs{\hat{x}}) \nonumber \eqvspp
& - \frac{h\psi_0}{2}\sum_{i=1}^3a_i\sum_{j=1}^3\hat{s}_j\int_{\partial\widehat{T}_j-\bs{\hat{x}}_0}\hat{x}_1^{3-i}\hat{x}_2^{i-1}\frac{\vert J_0\bs{\hat{x}}\vert^2+2h\left(h- \sqrt{\vert J_0\bs{\hat{x}}\vert^2 + h^2}\right)}{\vert J_0\bs{\hat{x}}\vert^4\sqrt{\vert J_0\bs{\hat{x}}\vert^2 + h^2}}ds(\bs{\hat{x}}) \label{eq:I0_vtx} \eqvspp
& - \frac{\psi_0}{2}\sum_{i=1}^4c_i\sum_{j=1}^3\hat{s}_j\int_{\partial\widehat{T}_j-\bs{\hat{x}}_0}\hat{x}_1^{4-i}\hat{x}_2^{i-1}\frac{\dsp\frac{3h^2\vert J_0\bs{\hat{x}}\vert + \vert J_0\bs{\hat{x}}\vert^3}{\sqrt{\vert J_0\bs{\hat{x}}\vert^2 + h^2}}-3h^2\mrm{asinh}\left(\frac{\vert J_0\bs{\hat{x}}\vert}{\vert h\vert}\right)}{2\vert J_0\bs{\hat{x}}\vert^5}ds(\bs{\hat{x}}), \nonumber
\end{align}
as well as
\begin{align}
I_1(h) & = \sum_{i=1}^3\hat{s}_j\int_{\partial\widehat{T}_j-\bs{\hat{x}}_0}\psi''_0(\bs{\hat{x}})\frac{2h^3 - 2h^2\sqrt{\vert J_0\bs{\hat{x}}\vert^2 + h^2} + \vert J_0\bs{\hat{x}}\vert^2\sqrt{\vert J_0\bs{\hat{x}}\vert^2 + h^2}}{3\vert J_0\bs{\hat{x}}\vert^4}ds(\bs{\hat{x}}) \nonumber \eqvspp
& \hspace{-.5cm} + h\sum_{i=1}^4e_i\sum_{j=1}^3\hat{s}_j\int_{\partial\widehat{T}_j-\bs{\hat{x}}_0}\hat{x}_1^{4-i}\hat{x}_2^{i-1}\frac{\dsp\frac{3h^2\vert J_0\bs{\hat{x}}\vert + \vert J_0\bs{\hat{x}}\vert^3}{\sqrt{\vert J_0\bs{\hat{x}}\vert^2 + h^2}} - 3h^2\mrm{asinh}\left(\frac{\vert J_0\bs{\hat{x}}\vert}{h}\right)}{2\vert J_0\bs{\hat{x}}\vert^5}ds(\bs{\hat{x}}) \nonumber \eqvspp
& \hspace{-.5cm} + h^2\sum_{i=1}^5f_i\sum_{j=1}^3\hat{s}_j\int_{\partial\widehat{T}_j-\bs{\hat{x}}_0}\hat{x}_1^{5-i}\hat{x}_2^{i-1}\frac{8h^4 + 12h^2\vert J_0\bs{\hat{x}}\vert^2 + 3\vert J_0\bs{\hat{x}}\vert^4 - 8h\left[\vert J_0\bs{\hat{x}}\vert^2 + h^2\right]^{\frac{3}{2}}}{3\vert J_0\bs{\hat{x}}\vert^6\left[\vert J_0\bs{\hat{x}}\vert^2 + h^2\right]^{\frac{3}{2}}}ds(\bs{\hat{x}}) \nonumber \eqvspp
& \hspace{-.5cm} + \sum_{i=1}^5g_i\sum_{j=1}^3\hat{s}_j\int_{\partial\widehat{T}_j-\bs{\hat{x}}_0}\hat{x}_1^{5-i}\hat{x}_2^{i-1}\frac{-8h^4 - 4h^2\vert J_0\bs{\hat{x}}\vert^2 + \vert J_0\bs{\hat{x}}\vert^4 + 8h^3\sqrt{\vert J_0\bs{\hat{x}}\vert^2 + h^2}}{3\vert J_0\bs{\hat{x}}\vert^6\sqrt{\vert J_0\bs{\hat{x}}\vert^2 + h^2}}ds(\bs{\hat{x}}) \label{eq:I1_vtx} \eqvspp
& \hspace{-.5cm} + \sum_{i=1}^7h_i\sum_{j=1}^3\hat{s}_j\int_{\partial\widehat{T}_j-\bs{\hat{x}}_0}\hat{x}_1^{7-i}\hat{x}_2^{i-1}\frac{-16h^6 - 24h^4\vert J_0\bs{\hat{x}}\vert^2 - 6h^2\vert J_0\bs{\hat{x}}\vert^4 + \vert J_0\bs{\hat{x}}\vert^6}{3\vert J_0\bs{\hat{x}}\vert^8\left[\vert J_0\bs{\hat{x}}\vert^2 + h^2\right]^{\frac{3}{2}}}ds(\bs{\hat{x}}) \nonumber \eqvspp
& \hspace{-.5cm} + \sum_{i=1}^7h_i\sum_{j=1}^3\hat{s}_j\int_{\partial\widehat{T}_j-\bs{\hat{x}}_0}\hat{x}_1^{7-i}\hat{x}_2^{i-1}\frac{16h^5\sqrt{\vert J_0\bs{\hat{x}}\vert^2 + h^2} + 16h^3\vert J_0\bs{\hat{x}}\vert^2\sqrt{\vert J_0\bs{\hat{x}}\vert^2 + h^2}}{3\vert J_0\bs{\hat{x}}\vert^8\left[\vert J_0\bs{\hat{x}}\vert^2 + h^2\right]^{\frac{3}{2}}}ds(\bs{\hat{x}}). \nonumber
\end{align}
The integrals \cref{eq:I0_vtx}--\cref{eq:I1_vtx} are, again, analytic but nearly singular. The main difference with \cref{eq:In1_param_h}, however, is that when the origin lies on an edge, these are no longer singular but $\OO(1)$ and $\OO(\delta\hat{x})$. As a consequence, the Gauss quadrature error with a single quadrature point is small. For example, for $\cref{eq:I0_vtx}$, the quadrature error when the origin is close to an edge plateaus at $\OO(\epsilon\log(2/\epsilon))$ as the distance to that edge $\epsilon\to0$. Moreover, since the corresponding integral is multiplied by $\epsilon$, this only generates an error $\OO(\epsilon^2\log(2/\epsilon))$ in the total computation of \cref{eq:I0_vtx}. Similarly, for $\cref{eq:I1_vtx}$, the Gauss quadrature error is $\OO(\epsilon^2\log(2/\epsilon))$ and introduces an error $\OO(\epsilon^3\log(2/\epsilon))$. 

We recommend using the transplanted Gauss quadrature rule of \cref{sec:step5} for both $\cref{eq:I0_vtx}$ and $\cref{eq:I1_vtx}$; it outperforms Gauss quadrature in theory and practice (see \cref{thm:convergence} and \cref{fig:exp2}).

\newpage
\section{Python code}\label{sec:code} 
The following short Python code assumes that \texttt{numpy} has been imported as \texttt{np} and uses the \texttt{minimize} method from \texttt{scipy.optimize}. Finally, \texttt{confmap(t,mu,nu)} returns the value of the map \cref{eq:g_munu} and of its derivative at points $t$ and for parameters $\mu$ and $\nu$.

\begin{figure}[H]
\begin{footnotesize}
\begin{verbatim}
# Step 1 - Mapping back:
a, b, c = 0.6, 0.7, 0.5                
Fx = lambda x: x[0] + 2*(2*a-1)*x[0]*x[1]
Fy = lambda x: x[1] + 2*(2*b-1)*x[0]*x[1]
Fz = lambda x: 4*c*x[0]*x[1]
F = lambda x: np.array([Fx(x), Fy(x), Fz(x)])                            # map 
J1 = lambda x: np.array([1 + 2*(2*a-1)*x[1], 2*(2*b-1)*x[1], 4*c*x[1]])  # Jacobian (1st col)
J2 = lambda x: np.array([2*(2*a-1)*x[0], 1 + 2*(2*b-1)*x[0], 4*c*x[0]])  # Jacobian (2nd col)
x0 = F([0.5, 1e-4]) + 1e-4*np.array([0, 0, 1])                           # singularity

# Step 2 - Locating the singularity:
e = lambda x: F(x) - x0                                            
E = lambda x: np.linalg.norm(e(x))**2                                    # cost function
dE = lambda x: 2*np.array([e(x) @ J1(x), e(x) @ J2(x)])                  # gradient
x0h = minimize(E, np.zeros(2), method='BFGS', jac=dE, tol=1e-12).x       # minimization
h = np.linalg.norm(F(x0h) - x0)                                       

# Step 3 - Taylor & 2D Gauss quadrature:  
n = 10; t, w = np.polynomial.legendre.leggauss(n)                        # 1D wts/pts
W = 1/8*np.outer(w*(1+t), w)                                             # 2D wts
X = np.array([1/2*np.outer(1-t, np.ones(n)), 1/4*np.outer(1+t, 1-t)])    # 2D pts
psi = lambda x: np.linalg.norm(np.cross(J1(x), J2(x), axis=0), axis=0)
tmp = lambda x,i: F(x)[i] - x0[i]
nrm = lambda x: np.sqrt(sum(tmp(x,i)**2 for i in range(3)))
tmp0 = lambda x,i: J1(x0h)[i]*(x[0]-x0h[0]) + J2(x0h)[i]*(x[1]-x0h[1])
nrm0 = lambda x: np.sqrt(sum(tmp0(x,i)**2 for i in range(3)))
f = lambda x: psi(x)/nrm(x) - psi(x0h)/nrm0(x)                           # regularized integrand
I = np.sum(W * f(X))                                                     # 2D Gauss
    
# Steps 4 & 5 - Continuation & 1D (transplanted) Gauss quadrature:
s1, s2, s3 = x0h[1], np.sqrt(2)/2*(1-x0h[0]-x0h[1]), x0h[0]              # Distances
dr1, dr2, dr3 = 1/2, np.sqrt(2)/2, 1/2
tmp = lambda t,r,i: (J1(x0h)[i]*r(t)[0] + J2(x0h)[i]*r(t)[1])**2
g, dg = confmap(t, -1 + 2*x0h[0], 2*s1)                       
          
r = lambda t: np.array([-x0h[0] + (t+1)/2,  -x0h[1]])                    # edge r1
nrm = lambda t: np.sqrt(tmp(t,r,0) + tmp(t,r,1) + tmp(t,r,2))
f = lambda t: (np.sqrt(nrm(t)**2 + h**2) - h)/nrm(t)**2
I += psi(x0h) * s1 * dr1 * (dg * w @ f(g))                               # 1D transplanted Gauss

r = lambda t: np.array([1 - x0h[0] - (t+1)/2, -x0h[1] + (t+1)/2])        # edge r2
nrm = lambda t: np.sqrt(tmp(t,r,0) + tmp(t,r,1) + tmp(t,r,2))
f = lambda t: (np.sqrt(nrm(t)**2 + h**2) - h)/nrm(t)**2
I += psi(x0h) * s2 * dr2 * (w @ f(t))                                    # 1D Gauss

r = lambda t: np.array([-x0h[0], 1 - x0h[1] - (t+1)/2])                  # edge r3
nrm = lambda t: np.sqrt(tmp(t,r,0) + tmp(t,r,1) + tmp(t,r,2))
f = lambda t: (np.sqrt(nrm(t)**2 + h**2) - h)/nrm(t)**2
I += psi(x0h) * s3 * dr3 * (w @ f(t))                                    # 1D Gauss
\end{verbatim}
\end{footnotesize}
\caption{\textit{Python code for computing the integrals of the first example when the origin is close to $\bs{\hat{r}}_1$. A code for general quadratic triangles/singularities is available on the first author's \href{https://github.com/Hadrien-Montanelli/singintpy}{GitHub}~page.}}
\label{fig:code}
\end{figure}

\section*{Acknowledgments}

We thank Christian Soize for sharing a classified manuscript that served as the basis of a series of papers in the 1990s \cite{angelini1992a, angelini1992b, angelini1992c}. We also thank Mikael Slevinsky for a fruitful exchange of emails about the computation of near-singular integrals using conformal mapping.

\bibliographystyle{siamplain}
\bibliography{/Users/montanelli/Dropbox/HM/WORK/ACADEMIA/BIBLIOGRAPHY/references.bib}

\end{document}